\newmdenv[
  topline=false,
  bottomline=false,
  skipabove=\topsep,
  skipbelow=\topsep
]{siderules}
\theoremstyle{plain}
\newtheorem{theorem}{Theorem}[section]
\newtheorem*{theorem*}{Theorem}
\newtheorem*{theoremfr*}{Théorème}
\newtheorem{definition}[theorem]{Definition}
\newtheorem{proposition}[theorem]{Proposition}
\newtheorem{setup}[theorem]{Setup}
\newtheorem{lemma}[theorem]{Lemma}
\newtheorem{corollary}[theorem]{Corollary}
\theoremstyle{plain}
\theoremstyle{plain}
\theoremstyle{remark}
\newcommand{\pause}{\begin{center}
$\star \hspace{.1cm} \star$
\end{center}
}
\newcommand{\C}{\mathbf{C}}
\newcommand{\bbP}{\mathbf{P}}
\title{Divisorial Mori contractions of submaximal length.}
\author{Bruno Dewer}
\date{}
\begin{document}

\maketitle

\begin{abstract}
A result due to Cho, Miyaoka, Shepherd-Barron \cite{CMSB} and Kebekus \cite{Ke} provides a numerical characterization of projective spaces. More recently, Dedieu and Höring \cite{DH} gave a characterization of smooth quadrics based on similar arguments. As a relative version of \cite{CMSB} and \cite{Ke}, Höring and Novelli proved in \cite{HN} that the locus covered by positive-dimensional fibres in a Mori contraction of maximal length is a projective bundle up to birational modification. We change the length hypothesis and we prove that the exceptional locus of a divisorial Mori contraction of submaximal length is birational either to a projective bundle, or to a quadric bundle.
\end{abstract}

\section{Introduction}

If $X$ is a smooth Fano variety of dimension $n$, its pseudoindex is defined as
$$
l(X) = \min \left\{ -K_X \cdot \Gamma \: | \: \Gamma \subset X \text{ a rational curve} \right\}.
$$
This invariant provides much information on $X$:

\begin{theorem}[\cite{CMSB}, \cite{Ke}, \cite{Mi}, \cite{DH}]
Let $X$ be a smooth Fano variety of dimension $n$. 

\begin{enumerate}
\item[$\bullet$] \cite{CMSB}, \cite{Ke}: If for every rational curve $\Gamma \subset X$, one has $-K_X \cdot \Gamma \geq n+1$, then $X\simeq \bbP^n$. In this case, $l(X) = n+1$ and the equality $-K_X\cdot \Gamma = n+1$ holds iff. $\Gamma$ is a line.

\item[$\bullet$] \cite{Mi}, \cite{DH}: If for every rational curve $\Gamma \subset X$, one has $-K_X \cdot \Gamma \geq n$, then $X$ is isomorphic either to a quadric, or to a projective space. If $l(X) = n$, then $X$ is a quadric and the equality $-K_X \cdot \Gamma = n$ holds iff. $\Gamma$ is a line.
\end{enumerate}
\end{theorem}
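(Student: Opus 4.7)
My plan is to treat both parts uniformly via the variety of minimal rational tangents (VMRT). Since $X$ is a smooth Fano variety, the cone theorem together with Mori's bend-and-break lemma produces rational curves through every point; I would select a component $\mathcal{H}$ of $\mathrm{RatCurves}^n(X)$ whose general member dominates $X$ and minimizes $\ell := -K_X \cdot \Gamma$, so that the two hypotheses read $\ell \geq n+1$ and $\ell \geq n$ respectively. For a general $x \in X$, the sub-scheme $\mathcal{H}_x$ of curves of $\mathcal{H}$ passing through $x$ is projective and equidimensional of dimension $\ell - 2$, as one reads off from the standard splitting of the normal bundle of a general curve in $\mathcal{H}$.

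The core step will be to analyze the tangent map $\tau_x : \mathcal{H}_x \dashrightarrow \bbP(T_x X)$ sending a curve smooth at $x$ to its tangent direction. By Kebekus's finiteness theorem, $\tau_x$ is finite onto its image, and in fact birational onto it by the refinements of Hwang-Mok, so the VMRT $\mathcal{C}_x := \overline{\tau_x(\mathcal{H}_x)}$ has dimension $\ell - 2$ inside $\bbP(T_x X) \simeq \bbP^{n-1}$. In the first case, $\ell \geq n+1$ forces $\mathcal{C}_x = \bbP(T_x X)$, and the Cho-Miyaoka-Shepherd-Barron / Kebekus recognition theorem identifies $X$ with $\bbP^n$. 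In the second case, $\ell \geq n$ leaves two possibilities: either $\mathcal{C}_x = \bbP(T_x X)$, giving again $X \simeq \bbP^n$, or $\mathcal{C}_x$ is a hypersurface in $\bbP(T_x X)$, which one must then show to be a smooth quadric, after which Mok's recognition theorem for quadrics yields $X \simeq Q^n$.

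The main obstacle is this last step of the quadric case: promoting a hypersurface VMRT to a smooth quadric, ruling out every other shape the hypersurface could a priori take. I would follow Dedieu-Höring's strategy of combining a careful intersection-theoretic analysis of chains of minimal rational curves with a Cartan-Fubini-type extension argument that globalizes the quadric structure from a single projectivized tangent space to the whole variety. Finally, the equality assertions reduce to a direct computation on the model varieties: on $\bbP^n$ every rational curve has anticanonical degree a positive multiple of $n+1$, minimized precisely by lines, and the analogous count on $Q^n$ identifies lines as the unique rational curves of anticanonical degree $n$.
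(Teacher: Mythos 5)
This theorem is quoted in the paper as background from \cite{CMSB}, \cite{Ke}, \cite{Mi} and \cite{DH}; the paper contains no proof of it, so there is nothing internal to compare your argument against. Judged on its own terms, your proposal is a reasonable roadmap through the literature, but it is not a proof: the two decisive steps are exactly the theorems being stated, and you invoke them by name. In the first bullet, after the dimension count $\dim \mathcal H_x = \ell - 2 \geq n-1$ (which is fine: Mori's bend-and-break produces through every point a rational curve with $-K_X\cdot C \leq n+1$, so the hypothesis forces $\ell = n+1$, and the splitting of $T_X$ restricted to a general minimal curve gives the equality $\dim\mathcal H_x=\ell-2$), you conclude by citing ``the Cho--Miyaoka--Shepherd-Barron / Kebekus recognition theorem'' --- but the statement that a dominating unsplit family with $\dim \mathcal H_x \geq n-1$ forces $X \simeq \bbP^n$ \emph{is} the content of \cite{CMSB} and \cite{Ke}, so nothing has been established. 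The same happens in the second bullet, where you correctly isolate the genuinely hard point (a hypersurface VMRT need not a priori be a quadric) and then defer it wholesale to the cited papers. As a blind proof attempt, the substantive content is therefore missing.

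Two smaller points. The strategy you attribute to Dedieu--Höring (smoothness of the VMRT hypersurface plus a Cartan--Fubini-type extension) is really the Hwang--Mok route; \cite{DH} argue instead with unsplit families, the loci swept out by minimal curves through one and two points, and a reduction to the \cite{CMSB} statement, precisely so as to avoid the VMRT recognition machinery. Also, birationality of the tangent map is not needed for your dimension count; Kebekus's finiteness already gives $\dim\mathcal C_x=\dim\mathcal H_x$. The closing equality assertions are correct as stated: $-K_{\bbP^n}\cdot\Gamma=(n+1)\deg\Gamma$ and, on a quadric of dimension $n\geq 3$, $-K\cdot\Gamma=n\deg\Gamma$, so the minimal degree is attained exactly on lines in both cases.
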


Fano manifolds arise in the Mori program as general fibres of Mori fibre spaces. More generally, if $f : X \to Y$ is a \emph{fibration} (i.e., a surjective morphism with connected fibres and $\dim X > \dim Y$) from a smooth variety $X$ such that $-K_X$ is $f$-ample and $F$ is a smooth fibre over a smooth point of $Y$, then $F$ is a Fano manifold.

From now on, we will use the term \emph{contraction} to designate any surjective morphism $f : X \to Y$ with connected fibres such that there exists at least one curve $C \subset X$ contracted by $f$.

\vspace{.2cm}
Let us consider a smooth projective variety $X$ and an elementary Mori contraction $f : X \to Y$. In other words, the curves contracted by $f$ are those curves whose numerical equivalence classes all belong to a single $K_X$-negative extremal ray of the Mori cone of $X$. The length of such a contraction is defined as follows:

\begin{definition}[Length]
\label{def:length}
Let $X$ be a smooth projective variety and $f : X \to Y$ an elementary Mori contraction. The length of $f$ is defined as the minimal degree of $-K_X$ on the rational curves which are contracted by $f$:
$$
l(f) = \min \left\{ -K_X \cdot \Gamma \: | \: \Gamma \subset X \text{ a contracted rational curve} \right\}.
$$
\end{definition}

This notion can naturally be viewed as a relative version of the pseudoindex. Indeed, the pseudoindex of the general positive-dimensional fibre of $f$ can sometimes be recovered from the adjunction formula and $l(f)$. If $f$ is of fibre type (meaning $\dim X > \dim Y$), its general fibre $F$ is a Fano manifold with $K_F = K_X|_F$, yielding $l(F) \geq l(f)$, and $l(F) = l(f)$ if $F$ contains a rational curve $\Gamma$ verifying
\begin{equation}
\label{eq:minimalcurve}
-K_X\cdot \Gamma = l(f).
\end{equation}
If $f$ is birational and its exceptional locus $E$ is a hypersurface of $X$, and $F \subset E$ is a general fibre of $f|_E$ containing a rational curve $\Gamma$ satisfying \color{purple}(\ref{eq:minimalcurve})\color{black}, then $K_E = (K_X+E)|_E$ and $K_F = K_E|_F = (K_X+E)|_F$, so that $l(F)$ and $l(f)$ differ by the degree of the divisor $E$ on contracted curves. Thanks to a classical result due to P.~Ionescu and J.~Wiśniewski, the length of an elementary Mori contraction $X\to Y$ from a smooth variety $X$ is bounded from above:

\begin{theorem}[\cite{Io} Theorem 0.4, \cite{Wi} Theorem 1.1]
\label{thm:IonescuWisniewski}
Assume $f:X\to Y$ is an elementary Mori contraction from a smooth projective variety $X$. Let $E\subset X$ be an irreducible component of the $f$-exceptional locus (or $E = X$ if $f$ is of fibre type) and $F\subset E$ an irreducible component of a fibre. Then
$$
\dim E + \dim F \geq \dim X + l(f) - 1.
$$
\end{theorem}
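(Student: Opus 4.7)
The plan is to combine standard deformation theory of rational curves with Mori's bend-and-break lemma, using the minimality of $l(f)$ to obstruct degenerations.

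First I fix $E$ and consider an irreducible component $\mathcal V$ of the Chow scheme of $X$ parameterizing rational curves of anticanonical degree $l(f)$ in the extremal ray contracted by $f$; I choose $\mathcal V$ so that its associated family dominates $E$. This is possible because every component of the exceptional locus is covered by curves of the extremal ray, and every such curve is numerically equivalent to a sum of minimal ones. Standard deformation theory then yields $\dim\mathcal V\geq l(f)+n-3$ with $n=\dim X$; letting $\mathcal U\to\mathcal V$ denote the universal $\bbP^1$-family and $\mathrm{ev}:\mathcal U\to X$ the evaluation morphism, one has $\dim\mathcal U\geq l(f)+n-2$. Since $\mathrm{ev}$ dominates $E$, the fibre $\mathrm{ev}^{-1}(p)$ over a general point $p\in E$ satisfies $\dim\mathrm{ev}^{-1}(p)\geq l(f)+n-2-\dim E$, which also bounds from below (up to a generically finite correspondence) the dimension of the subfamily $\mathcal V_p\subset\mathcal V$ of curves through $p$.

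The next step is to transfer this count back into $X$. All curves in $\mathcal V_p$ are contracted and pass through $p$, hence lie inside a single irreducible component $F_0$ of the fibre $f^{-1}(f(p))$; let $W_p\subseteq F_0$ denote their sweep. The key claim is that $\dim W_p=\dim\mathcal V_p+1$, i.e.~that for a second general point $q\in W_p$ the family of $\mathcal V_p$-curves through both $p$ and $q$ is finite. This is where Mori's bend-and-break enters: otherwise, a one-parameter subfamily of curves with two fixed points would degenerate into a non-trivial effective cycle $\sum a_iC_i$ numerically equivalent to our minimal curve $\Gamma$; each $C_i$, being a contracted rational curve, would satisfy $-K_X\cdot C_i\geq l(f)$, forcing $\sum a_i(-K_X\cdot C_i)>l(f)=-K_X\cdot\Gamma$, a contradiction.

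Putting these steps together yields $\dim F_0\geq\dim W_p\geq l(f)+n-1-\dim E$. For an arbitrary irreducible component $F$ of any fibre of $f|_E$, the standard lower bound on fibre dimension for surjective morphisms gives $\dim F\geq\dim E-\dim f(E)=\dim F_0$, so that
$$\dim E+\dim F\;\geq\;\dim E+l(f)+n-1-\dim E\;=\;\dim X+l(f)-1,$$
as required. The main obstacle is the bend-and-break step: one must carefully produce the degenerating cycle and ensure that each of its components lies in the extremal ray, so that the minimality of $l(f)$ can be exploited to derive the numerical contradiction.
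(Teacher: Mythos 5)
The paper does not prove this statement at all --- it is quoted from Ionescu and Wiśniewski --- so your proposal can only be measured against the classical argument, which it does follow in outline (dimension count for the space of rational curves, bend-and-break through two fixed points, semicontinuity of fibre dimensions). The genuine gap is in your very first step: you assert the existence of a family $\mathcal V$ of rational curves of anticanonical degree exactly $l(f)$ whose locus dominates $E$, justified by the remark that every contracted curve is ``numerically equivalent to a sum of minimal ones''. Numerical decomposition inside the extremal ray does not produce actual rational curves of degree $l(f)$ through a general point of $E$: the global minimum $l(f)$ may be attained only on curves sweeping out a proper closed subset of $E$, or even a different irreducible component of the exceptional locus. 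That minimal curves reach every fibre is precisely the content of the paper's own Lemma \ref{lem:all fibres}, which is proved there only under the submaximal-length hypothesis and with real work; it is not a formal consequence of extremality.

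The gap is repairable, and the repair is what Wiśniewski actually does: let $d$ be the minimal value of $-K_X\cdot C$ over rational curves $C$ with class in the ray passing through a general point $p$ of $E$ (such curves exist because the exceptional locus is the locus of the ray and is covered by finitely many deformation families, one of which dominates $E$). Then $d\geq l(f)$, the dimension count gives $\dim\mathcal V_p\geq d+n-2-\dim E$, and bend-and-break still applies: in a degeneration $\sum a_iC_i$ each class $[C_i]$ lies in the ray by extremality, the component through $p$ has degree at least $d$ by minimality at $p$, and the remaining components have positive degree by $f$-ampleness of $-K_X$, so the total degree would exceed $d$. This yields $\dim F_0\geq d+n-1-\dim E\geq l(f)+n-1-\dim E$, and your final semicontinuity step (every component of every fibre of $E\to f(E)$ has dimension at least $\dim E-\dim f(E)=\dim F_0$) then closes the argument. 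Two smaller imprecisions: the curves of $\mathcal V_p$ need not all lie in a single component of the fibre, so you should take $F_0$ to be a fibre component containing a top-dimensional component of the sweep $W_p$; and the identity $\dim W_p=\dim\mathcal V_p+1$ should be the inequality $\dim W_p\geq\dim\mathcal V_p+1$.
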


\noindent
\textbf{Terminology.} The length of $f$ is \emph{maximal} if there exist an irreducible component $E$ of the $f$-exceptional locus (or $E = X$ if $f$ is of fibre type) and $F$ a fibre in $E$ such that $l(f) = \dim E - \dim X + \dim F + 1$. The length of $f$ is \emph{submaximal} if it is not maximal and $f$ admits an irreducible component $E$ of $\mathrm{Exc}(f)$ and a fibre $F \subset E$ such that $l(f) = \dim E - \dim X + \dim F$.

\vspace{.2cm}
We refer to \cite{HN} for a study of elementary Mori contractions of maximal length, both of fibre type and birational. Given $f : X \to Y$ a Mori contraction of maximal length from a smooth projective variety $X$, their main results \cite[Theorems 1.3 \& 1.4]{HN} state that the $f$-exceptional locus (respectively $X$) is birational to a projective bundle over its image if $f$ is birational (respectively, if $f$ is of fibre type).
 
In this paper, we change the assumptions of the above theorem, moving to the case of submaximal length for a birational elementary Mori contraction. We assume moreover that our contraction is \emph{divisorial}, i.e., that the exceptional locus has codimension $1$. Everything is specified in \color{purple}Setup \ref{setup:setup} \color{black}below.

\vspace{.2cm}
\noindent
\textbf{Notation.} If $f : X \to Y$ is a divisorial contraction and $E \subset X$ is the $f$-exceptional locus with $\dim E - \dim f(E) = n$, we denote by $E_\mathrm{eq}\to Z_\mathrm{eq}$ the \emph{equidimensional locus} of $f|_E$, that is, the locus in $E$ which is covered by the $f$-fibres of dimension $n$.

\vspace{.2cm}
\noindent
\textbf{Terminology.} If $f : X \to Y$ is an elementary Mori contraction and $\Gamma \subset X$ is a rational curve contracted by $f$ on which $-K_X$ has the lowest possible degree, in other words
$$
-K_X \cdot \Gamma = l(f),
$$
then we say that $\Gamma$ is an \emph{$f$-minimal curve}, or sometimes just a \emph{minimal curve} when there is no risk of confusion.

\subsection{Setup and statement of the main result}

The following setup contains all the assumptions that the remaining of this paper is based on.

\begin{setup} \label{setup:setup}
Let $f: X \to Y$ be an elementary Mori contraction from a smooth projective variety $X$. We assume that it is birational and divisorial with $E$ its exceptional divisor, which is irreducible by \cite[Proposition 6.10.b]{Deb01}, and we set $Z := f(E)$. If $E_z = f^{-1}(z)$ is a general fibre of $f|_E$ then by the Ionescu-Wiśniewski inequality given in \color{purple}Theorem \ref{thm:IonescuWisniewski}\color{black}, the length of $f$ is bounded from above by $\dim E_z$. We assume here that the length is submaximal, in other words 
$$
l(f) = \dim E_z - 1,
$$
for $E_z$ a fibre of the expected dimension $n := \dim E - \dim Z$. In other words, we have $l(f) = n-1$ (in particular, $n$ must be larger than $1$).
\end{setup}

\pause

We consider an $f$-minimal curve $\Gamma$ in the equidimensional locus $E_\mathrm{eq}$, the existence of which is ensured by \color{purple}Lemma \ref{lem:all fibres}\color{black}, and the Cartier divisor $-E|_E$, which is ample by Kleiman's criterion (see for instance \cite[Theorem 1.27.a]{Deb01}). Then we deduce information on $E_\mathrm{eq}\to Z_\mathrm{eq}$, according to the degree of $-E$ on $\Gamma$. Namely, our main result is the following:

\begin{theorem}
\label{thm:main}
Under the assumptions of \color{purple}Setup \ref{setup:setup}\color{black}, we have either $-E\cdot \Gamma = 1$ or $-E\cdot \Gamma = 2$. Furthermore:

\begin{enumerate}
\item[$\bullet$] If $-E\cdot \Gamma = 2$, then all the fibres of $E_\mathrm{eq} \to Z_\mathrm{eq}$ are normalized by $\bbP^n$ and the fibration $E_\mathrm{eq}\to Z_\mathrm{eq}$ is birational to a family of projective spaces. If moreover $n$ is even, $E_\mathrm{eq}$ is isomorphic to the projectivization of a vector bundle over $Z_\mathrm{eq}$.

\vspace{.2cm}
\item[$\bullet$] If $-E\cdot \Gamma = 1$, then $E_\mathrm{eq}\to Z_\mathrm{eq}$ is birational to a quadric bundle. Moreover, each reducible fibre of $E_\mathrm{eq}\to Z_\mathrm{eq}$ has two irreducible components, the reductions of which are normalized by $\bbP^n$.
\end{enumerate}
\end{theorem}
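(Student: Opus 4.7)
The plan is to first pin down the isomorphism class of a general fibre $F = E_z$ of $E_\mathrm{eq} \to Z_\mathrm{eq}$. By generic smoothness in characteristic zero, for $z \in Z_\mathrm{eq}$ general, $F$ is a smooth projective variety of dimension $n$, and since $-K_X$ is $f$-ample, $F$ is Fano. Adjunction yields $K_F = (K_X + E)|_F$, and for any rational curve $\Gamma' \subset F$ one gets $-K_F \cdot \Gamma' = -K_X \cdot \Gamma' - E \cdot \Gamma' \geq (n-1) + 1 = n$, combining $-K_X \cdot \Gamma' \geq l(f) = n-1$ (because $\Gamma'$ is contracted) with the ampleness of $-E|_E$. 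Hence $l(F) \geq n$, and the Miyaoka--Dedieu--H\"oring theorem recalled in the introduction identifies $F$ with either $\bbP^n$ or a smooth quadric $Q^n$.

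The dichotomy $-E \cdot \Gamma \in \{1,2\}$ then follows because $f$ is elementary: all contracted curves are numerically proportional, so $-E \cdot \Gamma$ is the same for every $f$-minimal curve. Evaluating on a line $\ell \subset F$: if $F \simeq \bbP^n$, then $-K_F \cdot \ell = n+1$ gives $-K_X \cdot \ell = n-1 = l(f)$ and $-E \cdot \ell = 2$; if $F$ is a quadric, $-K_F \cdot \ell = n$ gives $-K_X \cdot \ell = n-1$ and $-E \cdot \ell = 1$. In each case $\ell$ is itself $f$-minimal, so $-E \cdot \Gamma$ is indeed $1$ or $2$ according to the type of~$F$.

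To control all fibres of $E_\mathrm{eq} \to Z_\mathrm{eq}$ globally, I would analyse the family $\mathcal{H}$ of $f$-minimal rational curves on $E$. Through a general point of $F$ the minimal curves already sweep out $F$, and I would use Kebekus's tangent-map theorem together with upper-semicontinuity in the family $\mathcal{H}$ to show that each equidimensional fibre $E_z$ arises as a flat degeneration of the general-fibre type. In the projective case such degenerations must remain normalised by $\bbP^n$; in the quadric case, the only reducible degenerations of a smooth $n$-dimensional quadric are rank-$2$ quadrics, i.e.\ unions $\bbP^n \cup \bbP^n$ meeting along a $\bbP^{n-1}$, which produces the two-component statement with each reduced component normalised by $\bbP^n$.

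To upgrade the $\bbP^n$-fibration to a genuine projective bundle when $n$ is even, observe that $-E|_E$ restricts to $\mathcal{O}_{\bbP^n}(2)$ on each fibre, while the relative anti-canonical class restricts to $\mathcal{O}_{\bbP^n}(n+1)$. Since $\gcd(2, n+1) = 1$ when $n$ is even, a $\Z$-linear combination of these two classes produces a relative $\mathcal{O}(1)$, trivialises the Brauer obstruction and realises $E_\mathrm{eq}$ as $\bbP(\mathcal{E})$ for a vector bundle $\mathcal{E}$ on $Z_\mathrm{eq}$. I expect the main obstacle to be the careful analysis of special fibres: proving that \emph{every} fibre of $E_\mathrm{eq} \to Z_\mathrm{eq}$ is normalised by $\bbP^n$ (or has the announced two-component structure in the quadric case), and upgrading the projective-fibration picture to an actual bundle in the even-dimensional case, requires a combination of deformation theory for minimal rational curves, rigidity theorems of Koll\'ar type for families of $\bbP^n$'s, and the classification of low-rank quadric degenerations.
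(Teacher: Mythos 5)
Your overall shape (reduce to the degree of $-E$ on minimal curves, identify the general fibre as $\bbP^n$ or a quadric, use the $\gcd(2,n+1)=1$ trick for even $n$) matches the paper, but several of your key steps do not go through as written. The first gap is the appeal to generic smoothness: $E$ is an irreducible divisor in the smooth $X$ but need not itself be smooth, so the general fibre $F$ of $E_\mathrm{eq}\to Z_\mathrm{eq}$ is only a local complete intersection (Gorenstein), not a priori a smooth Fano, and the Miyaoka--Dedieu--H\"oring characterization of smooth quadrics cannot be invoked. The paper instead works with possibly nonnormal fibres throughout: it uses the unsplit family of minimal curves together with \cite[Theorem 0.1]{CMSB} (which only gives that the fibre is \emph{normalized} by $\bbP^n$ when the family has dimension $2n-2$), a conductor-parity argument (Lemma \ref{lem:conductor}) to rule out a nonnormal image of $\bbP^n$ when $-E\cdot\Gamma=1$, and the Beltrametti--Sommese generalization of Kobayashi--Ochiai to normal varieties to get a (possibly singular, normal) quadric. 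Relatedly, in your dichotomy you silently exclude the case of a line $\ell\subset F$ with $-K_X\cdot\ell=n$ and $-E\cdot\ell=1$ (so that $\ell$ is not minimal); this must be ruled out, e.g.\ by the paper's observation that proportionality of contracted classes would then force $-E\cdot\Gamma=\tfrac{n-1}{n}$ on a minimal curve, which is not an integer.

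The second, more serious gap is the treatment of special fibres. Saying that each fibre of $E_\mathrm{eq}\to Z_\mathrm{eq}$ is ``a flat degeneration of the general-fibre type'' whose only reducible limits are rank-two quadrics has no content without an ambient embedding in which the degeneration takes place: abstract flat limits of quadrics (or of $\bbP^n$) can be much worse, and $E_\mathrm{eq}\to Z_\mathrm{eq}$ is not even known to be flat. The paper's mechanism is to produce such an embedding: Andreatta--Wi\'sniewski (Theorem \ref{thm:AW}) gives surjectivity of the evaluation map $f^*f_*L\to L$ for $L=\mathcal O(-E)$ (resp.\ $J=-K_X+(\tfrac n2-1)E$) along \emph{every} fibre, Rossi's flattening (Lemma \ref{lem:localfreemodif}) turns $f_*L$ into a vector bundle $\mathcal E$ after a birational modification $Z'\to Z_\mathrm{eq}$, and the induced finite birational map of $E'$ onto its image in $\bbP_{Z'}(\mathcal E)$, combined with the constancy of $(L|_{E_z})^n$ over the normal base \cite[I.3.12]{Kol}, identifies the image fibrewise as a quadric (top self-intersection $2$) and forces the components of degenerate fibres to be $\bbP^n$'s (top self-intersection $1$). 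Your Brauer-obstruction remark for even $n$ is the right arithmetic idea but again presupposes a smooth $\bbP^n$-fibration; to get the stated isomorphism $E_\mathrm{eq}\simeq\bbP(f_*J)$ one must show every fibre is an honest $\bbP^n$, which the paper does via the same evaluation-map construction. Without these ingredients the statements about all fibres, the two-component structure of reducible fibres, and the projective-bundle conclusion remain unproved.
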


In \S 4 we provide a case-by-case proof of \color{purple}Theorem \ref{thm:main}\color{black}, according to the degree of the exceptional divisor on the $f$-minimal curves. The proof is split in three parts: the first part is \color{purple}Proposition \ref{lem:generalfibre}\color{black}, in which we show that we have either $-E\cdot \Gamma = 1$ or $-E\cdot \Gamma = 2$. The second part is \color{purple}Theorem \ref{thm:1st half} \color{black}(the case $-E\cdot \Gamma = 2$), and the last part is \color{purple}Theorem \ref{thm:2nd half} \color{black}(the case $-E\cdot \Gamma = 1$). The proof of \color{purple}Proposition \ref{lem:generalfibre} \color{black}relies on a technique which consists in degenerating families of $f$-minimal curves in the general fibre of $E_\mathrm{eq}\to Z_\mathrm{eq}$ to families of $f$-minimal curves contained in singular fibres, and bounding the dimension of said families from below.

\vspace{.2cm}
Restricting to the equidimensional locus $E_\mathrm{eq}\to Z_\mathrm{eq}$ is a necessary condition for \color{purple}Theorem \ref{thm:main}\color{black}. To illustrate this, \S 5 is dedicated to the construction of an example of a non-equidimensional divisorial elementary Mori contraction $X \to Y$, whose exceptional divisor is generically a quadric bundle over its image and contains a special fibre which is isomorphic to a projective space.

\section{Prerequisites}

One of the techniques that we will use to study the fibres contained in the exceptional divisor of a divisorial Mori contraction relies on the degeneration of curves. This requires to introduce the following notion:

\begin{definition}[Deformation family]
Consider a rational curve $\Gamma$ contained in a variety $X$. The datum of this curve is equivalent to the datum of a morphism $\bbP^1 \to X$ birational onto its image, modulo automorphisms of $\bbP^1$. In other words, it corresponds to a point of $\mathrm{RatCurves}^n(X)$: let $\mathcal H$ be an irreducible component of $\mathrm{RatCurves}^n(X)$ containing this point. We say that $\mathcal H$ is a \emph{deformation family of $\Gamma$ in $X$}.
\end{definition}

We refer to \cite[Proposition II.2.11.2]{Kol} for the construction of $\mathrm{RatCurves}^n(X)$ and that of deformations families. By construction, the points of such a deformation family $\mathcal H$ parametrize rational curves, i.e., irreducible and reduced $1$-cycles in $X$. This family $\mathcal H$ is the normalization of a subspace $V \subset \mathrm{Chow}(X)$; let $\overline V$ be the closure of $V$. Then one may consider $\overline{\mathcal H}$ the normalization of $\overline V$, such that $\mathcal H$ is dense in $\overline{\mathcal H}$. By construction, the points of $\overline{\mathcal H}$ parametrize either rational curves in $X$, or $1$-cycles in $X$ that are degenerations of rational curves.

\begin{definition}[Closed deformation family --- unsplit deformation family]
We say that $\overline{\mathcal H}$ is a \emph{closed deformation family} of $\Gamma$ in $X$.

\vspace{.2cm}
A closed deformation family $\overline{\mathcal H}$ is \emph{unsplit} if all its members are reduced and irreducible, in other words if $\mathcal H = \overline{\mathcal H}$.
\end{definition}

Moreover, given $\overline{\mathcal H}$ a closed deformation family of a rational curve in a variety $X$, there exists a \emph{universal family} over $\overline{\mathcal H}$
$$
\mathcal U = \left\{ (p,Z) \in X\times \overline{\mathcal H} \: | \: Z \in \overline{\mathcal H}, \: p\in \mathrm{Supp}(Z)\right\},
$$
with the two projections
\begin{center}
\begin{tikzcd}
\mathcal U \arrow[d,"q",swap] \arrow[rr,"ev"] & & X \\
\overline{\mathcal H}
\end{tikzcd}
\end{center}
In the above, the notation $ev$ stands for ``evaluation morphism''. The existence of $\mathcal U$ is explained in \cite[II.2.8, II.2.12]{Kol}. Moreover, as $\overline{\mathcal H}$ is normal by construction, we have the following:

\begin{lemma}
\label{cor:constant degree on defofamily}
Let $\Gamma$ be a rational curve contained in a variety $X$ and $\overline{\mathcal H}$ a closed deformation family of $\Gamma$ in $X$. Let $L$ be a line bundle on $X$. Then $L$ has the same degree on all the members of $\overline{\mathcal H}$.
\end{lemma}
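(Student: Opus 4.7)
The strategy is to exploit the fact that members of $\overline{\mathcal{H}}$ are algebraically equivalent as $1$-cycles on $X$, and invoke the implication algebraic equivalence $\Rightarrow$ numerical equivalence.

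First, I would recall the setup: by construction $\overline{\mathcal{H}}$ is the normalization of an irreducible closed subvariety $\overline{V}\subset \mathrm{Chow}(X)$, and the universal family $q:\mathcal U\to \overline{\mathcal H}$ sits inside $X\times \overline{\mathcal H}$ with evaluation morphism $ev$. For each closed point $h\in \overline{\mathcal H}$, the associated effective $1$-cycle $Z_h$ satisfies $L\cdot Z_h=\deg\bigl(ev^{\ast}L|_{q^{-1}(h)}\bigr)$, where $q^{-1}(h)$ is taken with the appropriate cycle-theoretic multiplicities coming from the tautological family on $\overline V$.

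Next, I would argue constancy of $h\mapsto L\cdot Z_h$ in two stages. On the open dense locus $\mathcal H\subset \overline{\mathcal H}$, fibres of $q$ are irreducible reduced rational curves, the restricted family $q^{-1}(\mathcal H)\to \mathcal H$ is flat with $1$-dimensional fibres, and hence the Hilbert polynomial of $ev^{\ast}L$ on the fibres is locally constant on $\mathcal H$; since $\mathcal H$ is irreducible the degree takes a single value, call it $d$. For a boundary point $h\in \overline{\mathcal H}\setminus \mathcal H$, I would connect $h$ to a general point $h_0\in \mathcal H$ through a morphism $\varphi:\widetilde C\to \overline{\mathcal H}$ from a smooth irreducible curve $\widetilde C$, which is possible because $\overline{\mathcal H}$ is irreducible and normal. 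Pulling back the universal family along $\varphi$ yields a family of $1$-cycles $\mathcal U\times_{\overline{\mathcal H}}\widetilde C\to \widetilde C$ which is flat, the base being a smooth curve and the fibres being equidimensional of dimension $1$. Constancy of Hilbert polynomials in proper flat families then gives $L\cdot Z_{\varphi(t)}=d$ for all $t\in \widetilde C$, and specializing to the preimage of $h$ yields $L\cdot Z_h=d$.

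The main delicate point is the possible non-flatness of $q:\mathcal U\to \overline{\mathcal H}$ at the boundary, which I avoid by restricting to smooth curves in the base: normality of $\overline{\mathcal H}$ guarantees that any two points can be joined by such a curve, and on a smooth $1$-dimensional base equidimensional families are automatically flat. Equivalently one can phrase the argument more abstractly: the $1$-cycles parametrized by points of the irreducible variety $\overline V\subset \mathrm{Chow}(X)$ are all algebraically equivalent in $X$, hence numerically equivalent, so $L\cdot Z_h$ is independent of $h\in \overline{\mathcal H}$.
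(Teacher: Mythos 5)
Your proposal is correct and follows essentially the same route as the paper: both reduce the statement to the constancy of intersection numbers of $ev^{*}L$ with the fibres of the universal family $q:\mathcal U\to\overline{\mathcal H}$ over the normal base $\overline{\mathcal H}$, the paper simply citing \cite[I.3.12]{Kol} for this fact where you unwind it (flatness over smooth curves through the normal base, constancy of Hilbert polynomials, equivalently algebraic equivalence implying numerical equivalence). The only point you gloss over --- that the flat limit along such a curve computes the Chow-theoretic cycle $Z_h$ with its multiplicities --- is exactly what the cited result of Koll\'ar guarantees for well-defined families over a normal base, so there is no gap.
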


\begin{proof}
Let $\mathcal U$ be the universal family over $\overline{\mathcal H}$, with $ev : \mathcal U \to X$ the evaluation map. The fibration $q : \mathcal U \to \overline{\mathcal H}$ is equidimensional onto a normal variety, so the line bundle $ev^*L$ has the same degree on all the fibres of $q$ by \cite[I.3.12]{Kol}. But by construction, the fibres of $q$ are the members of the family $\overline{\mathcal H}$.
\end{proof}

This means that each member of $\overline{\mathcal H}$ is a $1$-cycle which is numerically equivalent to a rational curve. It may happen that $\mathcal H \neq \overline{\mathcal H}$, in which case the family $\overline{\mathcal H}$ contains nonreduced or reducible $1$-cycles whose components are supported on rational curves. This may happen in certain situations, such as that of Mori's bend-and-break lemma; see for instance \cite[Proposition 3.5]{Deb01}.

\begin{corollary}
\label{cor:all contracted}
Let $f : X \to Y$ be a morphism, $\Gamma \subset X$ a rational curve contracted by $f$ and $\overline{\mathcal H}$ a closed deformation family of $\Gamma$ in $X$. Then all the members of $\overline{\mathcal H}$ are contracted by $f$.
\end{corollary}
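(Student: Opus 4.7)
The plan is to reduce ``being contracted'' to a numerical condition and then invoke Lemma \ref{cor:constant degree on defofamily}. In the Mori-theoretic setting of this paper the target $Y$ is projective, so I fix an ample line bundle $A$ on $Y$ and set $L := f^*A$, which is a nef line bundle on $X$.

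First I would compute $L \cdot \Gamma$: since $\Gamma$ is contracted by $f$, the pushforward $f_*\Gamma$ vanishes in the group of $1$-cycles on $Y$, and the projection formula gives $L \cdot \Gamma = A \cdot f_*\Gamma = 0$. Lemma \ref{cor:constant degree on defofamily} then forces $L$ to have degree zero on every member of $\overline{\mathcal H}$. Writing such a member as $C = \sum_i a_i C_i$ with $a_i > 0$ and each $C_i$ an irreducible curve (supported on a rational curve, by construction of $\overline{\mathcal H}$), I get $\sum_i a_i (L \cdot C_i) = 0$; but $L$ is nef, hence $L \cdot C_i \geq 0$ for every $i$, which forces $L \cdot C_i = 0$ componentwise. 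Applying the projection formula once more, $A \cdot f_* C_i = 0$, and the ampleness of $A$ yields $f_* C_i = 0$, i.e.\ each $C_i$ is mapped to a point by $f$. Therefore $C$ is contracted.

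I do not expect any serious obstacle. The argument is essentially an unpacking of Lemma \ref{cor:constant degree on defofamily} through the projection formula, and its only external ingredient is the existence of an ample line bundle on $Y$, granted by the projectivity hypothesis underlying the paper's setup. If one wished to remove that assumption, one could instead use the fact that every member $C$ is a connected fibre of the proper morphism $q : \mathcal U \to \overline{\mathcal H}$, so that $(f\circ ev)(C)$ is connected, and combine this with a rigidity argument; but this refinement is not needed here.
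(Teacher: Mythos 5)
Your argument is correct, and it is essentially the reasoning the paper leaves implicit: the corollary is stated without proof as an immediate consequence of Lemma~\ref{cor:constant degree on defofamily}, and the intended route is exactly yours (pull back an ample class from $Y$, note it has degree $0$ on $\Gamma$ hence on every member, and use nefness to conclude componentwise). Your remark that one needs $Y$ to carry an ample line bundle is a fair observation about the overly general phrasing ``let $f:X\to Y$ be a morphism,'' but this holds in every application in the paper, so there is nothing to fix.
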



\pause 

When considering the normalization $\nu : X' \to X$ of a variety $X$ with Cartier canonical divisor, we will sometimes need to investigate the divisor $\nu^*K_X - K_{X'}$, which yields some information on the nonnormal locus of $X$:

\begin{lemma}[Conductor]
\label{lem:conductor}
Let $X$ be a variety with Cartier canonical class $K_X$ which satisfies the $S_2$ condition, and $\nu : X' \to X$ it normalization. Then there exists an effective Weil divisor $\mathcal D$ on $X'$ such that $K_{X'}+\mathcal D$ is Cartier, and
$$
\nu^* K_X \simeq K_{X'} + \mathcal D.
$$
Moreover, $\nu(\mathcal D)$ is the nonnormal locus of $X$.

In particular, if $C$ is a projective, irreducible, reduced, Gorenstein curve, then the degree of the divisor $\mathcal D$ involved in its normalization is even.
\end{lemma}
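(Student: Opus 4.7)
The plan is to define $\mathcal D$ as the conductor divisor of $\nu$ and to deduce the linear equivalence from Grothendieck duality for finite morphisms.

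I would first introduce the conductor ideal
$$
\mathfrak c := \mathcal{H}om_{\mathcal O_X}(\nu_*\mathcal O_{X'}, \mathcal O_X),
$$
which is canonically a sheaf of ideals in both $\mathcal O_X$ and $\nu_*\mathcal O_{X'}$: concretely, $\mathfrak c$ is the largest subsheaf of $\mathcal O_X$ which is still an ideal of the larger ring $\nu_*\mathcal O_{X'}$. From this description, the closed subscheme $V(\mathfrak c) \subset X$ is exactly the locus where the inclusion $\mathcal O_X \hookrightarrow \nu_*\mathcal O_{X'}$ fails to be an isomorphism, i.e.\ the nonnormal locus of $X$. This yields the second assertion, and I would then use the $S_2$ hypothesis combined with the normality of $X'$ to see that the ideal $\mathfrak c \cdot \mathcal O_{X'}$ actually cuts out a pure codimension-one subscheme of $X'$: via Serre's criterion $R_1+S_2$ for normality, the nonnormal locus of an $S_2$ scheme is purely of codimension one and free of embedded components. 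The associated effective Weil divisor is the announced $\mathcal D$, and $\mathfrak c \cdot \mathcal O_{X'} = \mathcal O_{X'}(-\mathcal D)$.

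The linear equivalence is then a direct consequence of Grothendieck duality for the finite morphism $\nu$:
$$
\nu_*\omega_{X'} \simeq \mathcal{H}om_{\mathcal O_X}(\nu_*\mathcal O_{X'},\omega_X).
$$
Since $\omega_X$ is a line bundle by the assumption that $K_X$ is Cartier, the right-hand side identifies with $\mathfrak c \otimes_{\mathcal O_X} \omega_X$; passing to $X'$ gives $\omega_{X'} \simeq \mathcal O_{X'}(-\mathcal D) \otimes \nu^*\omega_X$, that is, $\nu^*K_X \simeq K_{X'}+\mathcal D$. In particular $K_{X'}+\mathcal D$ is Cartier, because $\nu^*K_X$ is.

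For the last assertion, the Gorenstein hypothesis makes $K_C$ Cartier, so the first part of the lemma applies to the normalization $\nu : C' \to C$. Taking degrees in $\nu^*K_C = K_{C'}+\mathcal D$ and using that $\deg K_C = 2p_a(C)-2$ and $\deg K_{C'} = 2p_g(C')-2$ are both even, one concludes that $\deg \mathcal D$ is even. The main technical point in the argument is the codimension-one purity of the conductor subscheme on $X'$, which is exactly where the $S_2$ hypothesis on $X$ is indispensable; the remainder is a formal manipulation of duality.
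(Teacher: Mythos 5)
Your argument is correct and, in substance, it is the paper's argument with the two citations unpacked: the paper simply invokes Koll\'ar's construction of the conductor subscheme (\cite[5.2.2]{Kol2}, which gives existence, the divisoriality under $S_2$, and the identification of its image with the nonnormal locus) and the formula $\nu^*K_X \simeq K_{X'}+\mathcal D$ (\cite[5.7.1]{Kol2}), while you re-derive both via the standard route --- the conductor ideal $\mathfrak c = \mathcal{H}om_{\mathcal O_X}(\nu_*\mathcal O_{X'},\mathcal O_X)$ and Grothendieck duality for the finite morphism $\nu$, using that $\omega_X$ is invertible to rewrite $\nu_*\omega_{X'}\simeq \mathfrak c\otimes\omega_X$. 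The final parity computation ($\deg\mathcal D = \deg K_C - \deg K_{C'} = 2(p_a(C)-g(C'))$) is identical to the paper's. The one step I would tighten is the purity claim: Serre's criterion by itself says only that a nonnormal $S_2$ variety must fail $R_1$ somewhere, i.e.\ that the nonnormal locus has \emph{some} codimension-one component; to get that \emph{every} component (and every associated prime of the conductor subscheme) has codimension one, you should localize at the generic point of a hypothetical component of codimension $\geq 2$ and use the $S_2$ extension property to force $\mathcal O_X = \nu_*\mathcal O_{X'}$ there, a contradiction. That localization argument is precisely the content of the result the paper cites, so your proof is complete once this is spelled out.
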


\begin{proof}
A construction of the conductor is provided in \cite[5.2.2]{Kol2}. In general, this is just a subscheme of $X'$, but when $X$ is $S_2$ it is a hypersurface of $X'$. By construction, the support of the conductor is the locus where $\nu$ is not an isomorphism, i.e., the preimage of the nonnormal locus of $X$. The formula
$$
\nu^*K_X \simeq K_{X'} + \mathcal D
$$
follows from \cite[5.7.1]{Kol2}, which holds under the condition that $K_X$ is Cartier and $X$ is $S_2$.

Now consider $C$ a projective reduced and irreducible curve with $K_C$ Cartier, and $C' \to C$ its normalization. By the Riemann-Roch formula for singular curves (see for instance \cite[Exercise IV.1.9]{Har}), we have $\deg K_C = 2p_a(C) - 2$. Hence the degree of the conductor divisor is $\deg K_C - \deg K_{C'} = 2(p_a(C) - g(C))$.
\end{proof}

\pause

Lastly, we need to introduce the following notion about the pullback of the pushforward of a line bundle via a contraction:

\begin{lemma}[Evaluation map]
\label{lem:evaluationmap}
Let $f: X \to Y$ be a contraction with projective fibres and $L$ a line bundle on $X$. Then the $\mathcal O_X$-module $f^*f_*L$ admits a morphism 
$$
f^*f_*L \to L
$$
which we call the \emph{relative evaluation map}, or simply the \emph{evaluation map}, of $L$.
\end{lemma}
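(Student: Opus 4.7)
The plan is to construct the evaluation map as the counit of the adjunction between $f^*$ and $f_*$. Recall that for any morphism $f : X \to Y$, the pullback functor $f^*$ on quasi-coherent sheaves is left adjoint to the pushforward $f_*$, i.e., for every quasi-coherent $\mathcal{O}_Y$-module $\mathcal{F}$ and quasi-coherent $\mathcal{O}_X$-module $\mathcal{G}$, one has a functorial isomorphism
$$
\mathrm{Hom}_{\mathcal{O}_X}(f^*\mathcal{F}, \mathcal{G}) \;\cong\; \mathrm{Hom}_{\mathcal{O}_Y}(\mathcal{F}, f_*\mathcal{G}).
$$
No hypothesis on $f$ beyond being a morphism of ringed spaces is needed for this; in particular projectivity of fibres plays no role in the mere existence of the map (it only matters later when one wants $f_*L$ to be a reasonable coherent sheaf).

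The proof then consists of a single step: specialize $\mathcal{F} := f_*L$ and $\mathcal{G} := L$. The adjunction yields
$$
\mathrm{Hom}_{\mathcal{O}_X}(f^*f_*L, L) \;\cong\; \mathrm{Hom}_{\mathcal{O}_Y}(f_*L, f_*L),
$$
and the desired morphism $f^*f_*L \to L$ is defined as the image of $\mathrm{id}_{f_*L}$ under this isomorphism. Equivalently, it is the counit of the adjunction evaluated at $L$.

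For concreteness one can also describe the map locally: over an affine open $U \subset Y$, $(f_*L)(U) = L(f^{-1}(U))$, and the pullback $f^*f_*L$ on $f^{-1}(U)$ is, by definition, the $\mathcal{O}_{f^{-1}(U)}$-module generated by these sections, with the tautological map sending a local generator to the corresponding section of $L$. Since there is essentially no obstacle here—this is a formal consequence of the $(f^*, f_*)$ adjunction—the only thing to verify is that the two descriptions above agree, which is immediate from the construction of the unit/counit of the adjunction in \cite[II.5]{Har} or any standard reference.
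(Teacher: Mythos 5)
Your proof is correct and is essentially identical to the paper's: both obtain the evaluation map as the image of $\mathrm{id}_{f_*L}$ under the adjunction isomorphism $\mathrm{Hom}_{\mathcal O_X}(f^*f_*L,L)\simeq \mathrm{Hom}_{\mathcal O_Y}(f_*L,f_*L)$ from \cite[II.5]{Har}, i.e.\ as the counit of the $(f^*,f_*)$ adjunction. Your remark that the projectivity of the fibres is not needed for the mere existence of the map is accurate and consistent with the paper's argument.
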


\begin{proof}
By \cite[II.5]{Har}, there is a canonical isomorphism of groups
$$
\mathrm{Hom}_{\mathcal O_X}(f^*f_*L,L) \simeq \mathrm{Hom}_{\mathcal O_Y}(f_*L,f_*L),
$$
and the evaluation map $f^*f_*L \to L$ is the element of $\mathrm{Hom}_{\mathcal O_X}(f^*f_*L,L)$ which corresponds to the identity $id : f_*L \to f_*L$.
\end{proof}

Furthermore, if $p\in Y$ is a point such that $f_*L$ is locally free of rank $r$ at $p$, and the fibre $X_p = f^{-1}(p)$ is such that
$$
\dim H^0(X_p,L|_{X_p}) = r,
$$
then the restriction of the evaluation map $f^*f_*L \to L$ to $X_p$ is a morphism of vector bundles
$$
H^0(X_p,L|_{X_p}) \otimes \mathcal O_{X_p} \to L|_{X_p}.
$$
Hence the evaluation map is surjective along the fibre $X_p$ iff. $L|_{X_p}$ is globally generated.

\begin{theorem}[\cite{AW}, Theorem 5.1]
\label{thm:AW}
Let $f : X \to Y$ be a contraction from a projective normal variety with at worst klt singularities. Assume that $f$ is supported by $K_X + rL$ for some line bundle $L$ and $r$ a rational number (in other words, $K_X + rL$ has degree zero on all contracted curves and positive degree on all other curves). Assume that $L$ is $f$-ample, i.e., its restriction to every $f$-fibre is ample.

Consider a fibre $X_p$ of $f$. Assume moreover that
\begin{enumerate}
    \item[$\bullet$] $f$ is a fibration (meaning $\dim X > \dim Y$) and $\dim X_p < r+1$, or
    \item[$\bullet$] $f$ is birational and $\dim X_p \leq r+1$.
\end{enumerate}
Then the evaluation map $f^*f_*L \to L$ is surjective along $X_p$.
\end{theorem}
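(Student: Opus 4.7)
The plan is to split the proof into two independent parts: first show that $f_{*}L$ is locally free of rank $h^0(X_p,L|_{X_p})$ in a neighbourhood of $p$, and second show that $L|_{X_p}$ is globally generated; combined with the observation made just before the statement of the theorem, this gives the conclusion. The first part is the formal machinery, and the real content lies in the second.

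For the local-freeness step, I would exploit the fact that $K_X+rL$ is $f$-trivial to write $L-K_X\equiv_f (r+1)L$. Since $L$ is $f$-ample and the fibre-dimension hypothesis forces $r\geq 0$ (there is at least one contracted curve, so $\dim X_p\geq 1$), the divisor $L-K_X$ is $f$-ample. Applying the relative Kawamata--Viehweg vanishing theorem in the klt setting of \cite{KMM} yields $R^{i}f_{*}L=0$ for all $i\geq 1$ in a Zariski neighbourhood of $p$. From this, the standard cohomology-and-base-change argument (the successive criteria in \cite[III.12]{Har} applied inductively downwards in $i$) delivers the local freeness of $f_{*}L$ near $p$ together with the identification of its rank with $h^0(X_p,L|_{X_p})$.

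For the global-generation step, I would adapt Kawamata's base-point-freeness technique to the fibre. Suppose for contradiction that some $x\in X_p$ is a base point of $L|_{X_p}$. The idea is to produce an effective $\Q$-divisor $D\sim_{\Q}\lambda L$ with $\lambda<r+1$ whose multiplier ideal $\mathcal J(D)$ cuts out $x$ as an isolated component of the non-klt locus on a neighbourhood of $X_p$. Such a $D$ is constructed by intersecting relatively ample divisors passing through $x$ and using that $\dim X_p\leq r+1$ to force the needed multiplicity at $x$; this is exactly where the fibre-dimension bound is consumed. The case distinction between strict and non-strict inequality appears here: in the fibre-type situation one has to perturb $D$ slightly to keep it effective after subtraction of a small amount of the $f$-ample class, whereas in the birational case the exceptional nature of $E$ allows equality. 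Once $D$ is in place, one applies Kawamata--Viehweg vanishing to the pair $(X,D)$ together with $L-K_X-D\sim_{\Q}(r+1-\lambda)L$, which is still $f$-ample, to conclude that the restriction $H^0(X,L)\to L\otimes \mathcal O_x/(\mathcal J(D)\cdot L\otimes\mathcal O_x)$ is surjective. This contradicts the base-point assumption and proves $L|_{X_p}$ is globally generated.

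The main obstacle is the construction of the divisor $D$ with the prescribed singularities. One has to control the multiplier ideal carefully so that its cosupport meets $X_p$ exactly in $\{x\}$ (up to components one can discard), which is delicate because a general member of $|mL|$ for large $m$ need not have the right multiplicity along $X_p$: the bound $\dim X_p\leq r+1$ only provides enough room for Kawamata's tie-breaking trick if one uses sections vanishing heavily along the fibre. Handling this uniformly across both cases of the theorem, and tracking the klt condition on $X$ through the perturbation, is the technical heart of the argument.
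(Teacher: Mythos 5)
This statement is not proved in the paper at all: it is quoted verbatim from Andreatta--Wi\'sniewski \cite{AW} (their Theorem 5.1), so there is no internal argument to compare yours against. Your outline does follow the general strategy behind that result (relative vanishing to control $f_*L$, plus Kawamata's base-point-free technique on the fibre), but as written it has two genuine gaps, one in each half.

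First, the ``formal machinery'' half is not formal in the birational case. Cohomology and base change (Grauert's theorem and the criteria of \cite[III.12]{Har}) requires $\mathcal O_X$ (or $L$) to be flat over $Y$, and a birational contraction is essentially never flat along a positive-dimensional fibre. So the vanishing $R^if_*L=0$ does not, by that route, identify $f_*L\otimes k(p)$ with $H^0(X_p,L|_{X_p})$ or give local freeness of the expected rank. The surjectivity of $f^*f_*L\to L$ along $X_p$ has to be obtained differently, e.g.\ by proving surjectivity of $f_*L\to f_*(L\otimes\mathcal O_{X_p})$ via a vanishing statement for $R^1f_*(L\otimes\mathcal I_{X_p})$, which is itself nontrivial because $\mathcal I_{X_p}$ is not a pullback of a line bundle; this is where \cite{AW} do real work. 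Second, and more seriously, the global-generation half defers exactly the content of the theorem: the construction of the $\Q$-divisor $D\sim_{\Q}\lambda L$ with $\lambda<r+1$ whose non-klt locus meets $X_p$ in an isolated point is where the hypotheses $\dim X_p<r+1$ (fibre type) versus $\dim X_p\leq r+1$ (birational) are actually consumed, and your explanation of why the birational case tolerates the weaker inequality (``the exceptional nature of $E$ allows equality'') is not an argument. Acknowledging this as ``the technical heart'' and leaving it undone means the proposal is a plausible roadmap to \cite{AW}, not a proof; if you want a self-contained treatment you should either carry out the tie-breaking construction with the multiplicity count made explicit in both cases, or simply cite \cite{AW} as the paper does.
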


Moreover, when $f^*f_*L \to L$ is surjective on all of $X$, the following birational modification arises as an extension of the locus where the direct image $f_*L$ is locally free:

\begin{lemma}
\label{lem:localfreemodif}
Let $f:X\to Y$ be a fibration between irreducible quasi-projective varieties and $L$ a line bundle on $X$ such that the evaluation map $f^*f_*L \to L$ is surjective. Let $\mathcal U_L$ be the locus where $f_*L$ is locally free. Then there exists a birational modification $\mu : \mathcal Y \to Y$ which is an isomorphism on $\mathcal U_L$, together with a vector bundle $\mathcal V$ on $\mathcal Y$ which coincides with $f_*L$ on $\mathcal U_L$. 

Moreover, let $\mathcal X$ be the irreducible component of the fibre product $X\times_Y \mathcal Y$ which dominates $X$, with the following commutative diagram.
\begin{center}
\begin{tikzcd}
\mathcal X \arrow[rr,"\mu'"] \arrow[d,"f'",swap] & & X \arrow[d,"f"] \\
\mathcal Y \arrow[rr,"\mu",swap] & & Y
\end{tikzcd}
\end{center}
Then there is a surjective map $f'^*\mathcal V \to \mu'^*L$ which coincides with the evaluation map $f^*f_*L \to L$ on $f^{-1}(\mathcal U_L)$.
\end{lemma}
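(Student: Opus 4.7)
The plan is to construct $\mu : \mathcal Y \to Y$ by invoking a flattening theorem for the coherent sheaf $\mathcal F := f_*L$, and then transport the evaluation map through the base change. Since $Y$ is integral and noetherian and $\mathcal F$ is coherent, the Rossi–Raynaud–Gruson flattening theorem (or the more elementary blowup construction via the flattening stratification) yields a projective birational modification $\mu : \mathcal Y \to Y$, isomorphism over the locus where $\mathcal F$ is already flat, such that $\mu^* \mathcal F / \mathrm{tors}$ is locally free. Since $\mathcal F$ is locally free precisely on $\mathcal U_L$, we may arrange $\mu$ to be an isomorphism on $\mu^{-1}(\mathcal U_L)$. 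Set $\mathcal V := \mu^* \mathcal F / \mathrm{tors}$; this is the desired vector bundle, and it agrees with $\mathcal F$ on $\mathcal U_L$ since $\mu^* \mathcal F$ is already torsion-free there.

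Next I would form the main component $\mathcal X$ of $X \times_Y \mathcal Y$ and call the two projections $\mu' : \mathcal X \to X$ and $f' : \mathcal X \to \mathcal Y$. Since $f$ is a fibration and $\mu$ is birational, $\mu'$ is birational, and by construction $\mu'$ is an isomorphism over $f^{-1}(\mathcal U_L)$. Pulling back the surjective evaluation map $f^* \mathcal F \to L$ by $\mu'$ and using flat base change $(\mu')^* f^* \mathcal F \simeq (f')^* \mu^* \mathcal F$, I obtain a surjection $(f')^* \mu^* \mathcal F \to (\mu')^* L$.

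The key step is then to show this surjection factors through $(f')^* \mathcal V$. Write $T \subset \mu^* \mathcal F$ for the torsion subsheaf, so that $0 \to T \to \mu^*\mathcal F \to \mathcal V \to 0$ is exact; applying the right-exact functor $(f')^*$ shows that the kernel of $(f')^*\mu^*\mathcal F \to (f')^*\mathcal V$ is the image of $(f')^* T$. Since $\mu$ is an isomorphism over $\mathcal U_L$ and $\mathcal F$ is locally free there, $T$ vanishes on $\mu^{-1}(\mathcal U_L)$, hence $(f')^* T$ vanishes on the dense open $f'^{-1}(\mu^{-1}(\mathcal U_L)) \subset \mathcal X$. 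Therefore the image of $(f')^* T$ in the line bundle $(\mu')^* L$ is a subsheaf of a torsion-free sheaf on the irreducible variety $\mathcal X$ that vanishes on a dense open, hence is zero. This gives the required factorization $(f')^* \mathcal V \to (\mu')^* L$, and surjectivity is inherited from the surjectivity of $(f')^* \mu^* \mathcal F \to (\mu')^* L$. By construction, the whole diagram restricts on $f^{-1}(\mathcal U_L)$ to the original evaluation map.

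The main obstacle is really just the first step, namely producing $\mu$ and $\mathcal V$ with the two required properties (locally free modulo torsion upstairs, and isomorphism over $\mathcal U_L$); once this flattening is in hand, the rest of the proof is formal manipulation of pullbacks combined with the torsion-free/dense-open argument above.
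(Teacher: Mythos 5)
Your proposal is correct and follows essentially the same route as the paper: the paper also invokes Rossi's theorem to produce $\mu$ and a locally free quotient $\mathcal V$ of $\mu^*(f_*L)$ with torsion kernel $\mathcal K$, and then kills the induced map $f'^*\mathcal K \to \mu'^*L$ by the same torsion-versus-line-bundle argument on the irreducible variety $\mathcal X$ to obtain the surjective factorization $f'^*\mathcal V \to \mu'^*L$.
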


\begin{proof}
By \cite[Theorem 3.5]{Ro} there exist a birational modification $\mu : \mathcal Y \to Y$ and a vector bundle $\mathcal V$ on $\mathcal Y$ with a surjective map
\begin{equation} \label{eq:kernel}
\mu^*(f_*L) \to \mathcal V
\end{equation}
whose kernel is torsion. On the locus $\mathcal U_L$ where $f_*L$ is locally free, $\mu$ is an isomorphism; in particular, we have $f_*L \simeq \mathcal V$ on $\mathcal U_L$. Moreover, the pullback by $\mu'$ of the surjective map
$$
f^*f_*L \to L
$$
is also surjective. Let $\mathcal K$ denote the kernel of the map \color{purple}(\ref{eq:kernel})\color{black}. Then in the following diagram, the top row is exact, and the vertical arrow is onto
\begin{center}
\begin{tikzcd}[column sep=tiny]
f'^*\mathcal K \arrow[rrrrrr] \arrow[drrrrrrrr,"\alpha",swap]  & & & & & & f'^*\mu^*(f_*L) & \simeq & \mu'^*(f^*f_*L) \arrow[rrr] \arrow[d] & & &  f'^*\mathcal V \arrow[rrr] & & & 0 \\
& & & & & & & & \mu'^*L & & & & & & 
\end{tikzcd}
\end{center}
The map $\alpha$ is zero, since $\mu'^*L$ is a line bundle and $f'^*\mathcal K$ is torsion on the irreducible variety $\mathcal X$. This ensures the existence of a factorization $f'^*\mathcal V \to \mu'^*L$ which is surjective as well.
\end{proof}

\section{The general fibre of the exceptional locus}

In this section, the aim is to identify the general fibre of $f|_E$. It requires that we state first the following result:

\begin{lemma}
\label{lem:all fibres}
Under the assumptions of \color{purple}Setup \ref{setup:setup}\color{black}, every fibre of $E\to Z$ contains an $f$-minimal curve.
\end{lemma}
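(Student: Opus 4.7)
The plan is to build a single closed deformation family $\overline{\mathcal H}$ of $f$-minimal curves and show that the induced map $\pi : \overline{\mathcal H} \to Z$, $[C] \mapsto f(C)$, is surjective. Once this is done, for every $z \in Z$ any $[C] \in \pi^{-1}(z)$ is an $f$-minimal curve contained in $E_z$. The infimum defining $l(f)$ is attained because $-K_X$ takes integer values on curves, so at least one $f$-minimal curve $\Gamma_0$ exists, and being contracted it lies in $E$. Let $\overline{\mathcal H}$ be a closed deformation family containing $[\Gamma_0]$. I first verify that $\overline{\mathcal H}$ is unsplit: every member $C \in \overline{\mathcal H}$ satisfies $-K_X \cdot C = n - 1$ by \Cref{cor:constant degree on defofamily}, and by \Cref{cor:all contracted} each irreducible component of $C$ is an $f$-contracted rational curve, hence of $-K_X$-degree at least $l(f) = n - 1$ by minimality. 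A decomposition $C = \sum n_i C_i$ with $n_i \geq 1$ and more than one term (or any $n_i \geq 2$) would force $n - 1 = \sum n_i(-K_X \cdot C_i) \geq 2(n - 1)$, which is impossible since $n \geq 2$ by \Cref{setup:setup}. So every member of $\overline{\mathcal H}$ is an $f$-minimal curve. Applying the rigidity lemma to the composition $f \circ ev : \mathcal U \to Y$ (which is constant on the fibres of $q : \mathcal U \to \overline{\mathcal H}$) produces the sought morphism $\pi$, and $W := \pi(\overline{\mathcal H})$ is closed in $Z$ because $\overline{\mathcal H}$ is proper.

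Surjectivity of $\pi$ follows from a dimension count. Mori's deformation bound in the smooth variety $X$ gives
$$\dim \overline{\mathcal H} \,\geq\, \dim X + l(f) - 3 \,=\, \dim Z + 2n - 3.$$
For a general $z \in W$ the fibre $\pi^{-1}(z)$ is an unsplit family of rational curves in the projective variety $E_z$, so the Mori--Kebekus estimate $\dim \pi^{-1}(z) \leq 2 \dim \mathrm{Locus}(\pi^{-1}(z)) - 2 \leq 2n - 2$ applies, using that for $z$ generic in $Z$ the fibre $E_z$ has the expected dimension $n$. Combining, $\dim W \geq \dim Z - 1$. The count is improved by deforming $\Gamma_0$ inside the Gorenstein divisor $E$ (with $K_E = (K_X + E)|_E$ by adjunction): this yields $\dim \overline{\mathcal H} \geq \dim Z + 2n + (-E \cdot \Gamma_0) - 4$, so $\dim W \geq \dim Z + (-E \cdot \Gamma_0) - 2 \geq \dim Z$ as soon as $-E \cdot \Gamma_0 \geq 2$, forcing $W = Z$ by closedness and irreducibility.

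The main obstacle is the case $-E \cdot \Gamma_0 = 1$, where the dimension count only yields $\dim W \geq \dim Z - 1$, leaving open the possibility that $W$ is a divisor in $Z$. To rule this out I would refine the analysis of $\pi$ near points $z$ where $\dim \mathrm{Locus}(\pi^{-1}(z)) < n$ (in which case the Mori--Kebekus bound is strict and $\dim W$ improves), or argue by specialisation: given a putative $z_0 \in Z \setminus W$, take a smooth curve $T \subset Z$ through $z_0$ meeting $W$, and specialise members of $\pi^{-1}(T \cap W)$ in the Chow scheme of $X$ (which is proper) to obtain a limit $1$-cycle lying over $z_0$; by the unsplitness argument of the first paragraph this limit is necessarily an irreducible $f$-minimal curve, producing an element of $\overline{\mathcal H}$ mapping to $z_0$ and contradicting $z_0 \notin W$.
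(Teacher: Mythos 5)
Your reduction of the lemma to the surjectivity of $\pi:\overline{\mathcal H}\to Z$ is fine, and the unsplitness computation and the final properness step (closed image, hence dominant implies surjective) match the last paragraph of the paper's proof. The problem is that you never actually establish dominance. Your dimension count gives only $\dim W\ge\dim Z-1$, and you concede that the case $-E\cdot\Gamma_0=1$ is open --- but this is not a marginal case: it is precisely the quadric-bundle case of Theorem \ref{thm:main}, so it cannot be waved away. Neither of your proposed repairs closes it. The ``refine the analysis where $\dim\mathrm{Locus}(\pi^{-1}(z))<n$'' suggestion is not carried out. The specialisation argument is circular: if $W\subsetneq Z$ is a proper closed subset (say a divisor), then a curve $T$ through $z_0\notin W$ meets $W$ in finitely many points, so there is no one-parameter family of members of $\overline{\mathcal H}$ over a punctured neighbourhood of $z_0$ in $T$ to take a Chow-theoretic limit of. Specialisation can only move you into the closure of $W$, and $W$ is already closed; the whole difficulty is to show $W$ is dense, which is exactly what the argument presupposes.

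The paper gets dominance by a different mechanism, which you are missing. Work inside a general (generically reduced, $n$-dimensional) fibre $E_z$ over a smooth point of $Z$: by adjunction $-K_{E_z}=(-K_X-E)|_{E_z}$ is ample, so $E_z$ carries a rational curve $C$ of minimal $-K_{E_z}$-degree meeting its smooth locus, and a bend-and-break argument inside $E_z$ forces $-K_{E_z}\cdot C\in\{n,n+1\}$, hence $-K_X\cdot C\in\{n-1,n\}$ since $-E\cdot C\ge 1$. The case $-K_X\cdot C=n$ is then excluded not by a dimension count but by an integrality argument: it would give $-E\cdot C=1$, hence $K_X\sim_f nE$ by \cite[Theorem 7.39.c]{Deb01}, and then any genuine $f$-minimal curve $\Gamma$ (which exists, as you note) would satisfy $-E\cdot\Gamma=\tfrac{n-1}{n}\notin\mathbf Z$. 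So $C$ itself is $f$-minimal, every general fibre contains a minimal curve, some deformation family dominates $Z$, and your properness argument finishes. Without an input of this kind --- producing a low-degree curve in the general fibre from the Fano structure of $E_z$ rather than deforming a fixed minimal curve --- your proof does not go through.
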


\begin{proof}
Let $E_z \subset E_\mathrm{eq}$ be a generically reduced fibre of dimension $n$ over a smooth point of $Z_\mathrm{eq}$. Since $E_z$ is locally a complete intersection in $X$, by the adjunction formula we have $-K_{E_z} = -K_E|_{E_z} = (-K_X-E)|_{E_z}$; in particular, $-K_{E_z}$ is ample. Let now $C \subset E_z$ be a rational curve whose degree is minimal with respect to $-K_{E_z}$ among rational curves meeting the smooth locus of $E_z$. Then
\begin{equation} \label{eq:adjunction}
-K_{E_z}\cdot C = -K_X\cdot C -E\cdot C \geq n-1 -E\cdot C.
\end{equation}
By ampleness of $-E|_{E_z}$, it follows that $-K_{E_z}\cdot C \geq n$. Assume by contradiction that $-K_{E_z}\cdot C > n+1$. By \cite[Proposition II.1.3]{Kol}, there exists a deformation family $\mathcal H_z$ of $C$ in $E_z$ such that
$$
\dim \mathcal H_z \geq -K_{E_z}\cdot C + (n-3) > 2n-2,
$$
and considering the universal family over $\mathcal H$,
\begin{center}
\begin{tikzcd}
\mathcal U_z \arrow[rr,"ev"] \arrow[d,"\pi",swap] & & E_z \\
\mathcal H_z
\end{tikzcd}
\end{center}
where $\pi$ is a $\bbP^1$-bundle, we have $\dim \mathcal U_z \geq 2n$. Fixing $p\in E_z$ a general point and considering $\mathcal U_{z,p} = \pi^{-1}(\pi(ev^{-1}(p)))$, we have $\dim \mathcal U_{z,p} \geq n+1$, hence there exists a fibre of $\mathcal U_{z,p} \to E_z$ of positive dimension. In other words, there exists a point $q\in E_z$ and a positive-dimensional family of rational curves through $p$ and $q$ (in particular, meeting the smooth locus of $E_z$), all of which have minimal degree among rational curves meeting $(E_z)_{\mathrm{smooth}}$. By the bend-and-break lemma, there exists such a member which is reducible or nonreduced, which is not possible by the minimality of $-K_{E_z}\cdot C$. As a conclusion, we have eiher $-K_{E_z}\cdot C = n$ or $-K_{E_z}\cdot C = n+1$.

Going back to \color{purple}(\ref{eq:adjunction})\color{black}, we deduce that one of the two following cases occur:
\begin{enumerate}
\item[$\bullet$] $-K_X \cdot C = n-1 = l(f)$, in which case $C$ is an $f$-minimal curve,
\item[$\bullet$] $-K_X \cdot C = n$, in which case $-E\cdot C = 1$ and by \cite[Theorem 7.39.c]{Deb01} we have $(K_X - nE) \sim_f 0$ (in other words $-K_X$ and $-nE$ are relatively linearly equivalent).
\end{enumerate}
Assume by contradiction that the second case holds. In that case, for $\Gamma \subset E$ an $f$-minimal curve, we have $-K_X\cdot \Gamma = n-1$ by our length hypothesis, hence $-E\cdot \Gamma = \frac{n-1}{n}$, which is not an integer. This is a contradiction.

It follows that any generically reduced fibre of dimension $n$ over the smooth locus of $Z$ contains an $f$-minimal curve. This ensures that there exists a deformation family of $f$-minimal curves which dominates $Z$. This family is unsplit by the length hypothesis and \color{purple}Lemma \ref{cor:constant degree on defofamily}\color{black}, hence compact. Since it is proper over $Z$, it is surjective. Hence every fibre of $E \to Z$ contains an $f$-minimal curve.
\end{proof}

\begin{lemma}
\label{lem:covered}
Under the assumptions of \color{purple}Setup \ref{setup:setup}\color{black}, the $f$-minimal curves cover the equidimensional locus $E_\mathrm{eq}$.
\end{lemma}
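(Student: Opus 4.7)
The plan is to fix $z \in Z_\mathrm{eq}$ and show that $f$-minimal curves cover $E_z$; the statement then follows from $E_\mathrm{eq} = \bigcup_{z \in Z_\mathrm{eq}} E_z$. I would start from the unsplit deformation family $\mathcal H$ of $f$-minimal curves that dominates $Z$, produced in the proof of Lemma \ref{lem:all fibres}. By Corollary \ref{cor:all contracted}, every member of $\mathcal H$ is contracted by $f$, so $\mathcal H$ comes with a proper surjection $\pi : \mathcal H \to Z$. Applying Kollár's deformation bound to any member $C$,
$$
\dim \mathcal H \geq -K_X \cdot C + \dim X - 3 = \dim Z + 2n - 3,
$$
so every fibre $\mathcal H_z = \pi^{-1}(z)$ satisfies $\dim \mathcal H_z \geq 2n - 3$, and the corresponding piece $\mathcal U_z$ of the universal family has $\dim \mathcal U_z \geq 2n - 2$.

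Now fix $z \in Z_\mathrm{eq}$ (so $\dim E_z = n$) and consider the evaluation map $ev_z : \mathcal U_z \to E_z$. Assume for contradiction that $ev_z$ is not surjective, so its image $W$ has dimension at most $n-1$. A dimension count on $ev_z^{-1}(p)$ shows that a general $p \in W$ lies on a subfamily of $\mathcal H_z$ of dimension at least $n-1$, whose curves sweep a locus $W_p \subseteq W$ of dimension at most $n-1$. A second count then produces, for a general $q \in W_p$, a subfamily of dimension at least $1$ whose curves all pass through both $p$ and $q$. Mori's bend-and-break lemma yields a reducible or non-reduced $1$-cycle in the closure of this subfamily; but that closure lies inside $\mathcal H$, whose members are all irreducible and reduced by unsplittedness, a contradiction.

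The delicate step is to argue that the bend-and-break limit is indeed a member of $\mathcal H$. If this point causes trouble, the contradiction can be obtained numerically instead: the limit $1$-cycle is supported in the closed fibre $E_z$ and its rational components are contracted by $f$, hence each carries $-K_X$-degree at least $l(f) = n-1$; as the total $-K_X$-degree equals $n-1$ and $n > 1$, no reducible or non-reduced configuration is possible.
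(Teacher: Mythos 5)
Your proposal is correct and follows essentially the same route as the paper: the same dimension bound $\dim\mathcal H_z\geq 2n-3$, the same two-step dimension count producing a positive-dimensional family of minimal curves through two fixed points of $E_z$, and the same bend-and-break contradiction with unsplitness. Your ``numerical fallback'' is in fact exactly the paper's justification that the closed family is unsplit (constant degree on the closed family via Lemma \ref{cor:constant degree on defofamily} plus the length hypothesis $-K_X\cdot\Gamma=l(f)=n-1$), so the delicate step you flag is already covered.
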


\begin{proof}
Let $\Gamma \subset E_\mathrm{eq}$ be an $f$-minimal curve. As $X$ is smooth, we may apply \cite[Theorem II.1.3]{Kol}, which ensures the existence of a deformation family $\mathcal H$ of $\Gamma$ in $X$ such that
$$
\dim \mathcal H \geq -K_X \cdot \Gamma - 3 + \dim X = n-4 + \dim X.
$$
Since each member of $\mathcal H$ lies in a fibre of $f$ by \color{purple}Corollary \ref{cor:all contracted}\color{black}, and we have shown in \color{purple}Lemma \ref{lem:all fibres} \color{black}that every fibre of dimension $n$ contains an $f$-minimal curve, there exists such a family $\mathcal H$ which is surjective over $Z$. In particular, it admits a fibration $\mathcal H_\mathrm{eq} \to Z_\mathrm{eq}$, where $\mathcal H_\mathrm{eq}$ is dense in $\mathcal H$, such that the fibre $\mathcal H_z$ over a point $z\in Z_\mathrm{eq}$ parametrizes deformations of $\Gamma$ inside $E_z$. Consider such a fibre $E_z$ of dimension $n = \dim E - \dim Z$; by the above inequality we have
$$
\dim \mathcal H_z = \dim \mathcal H - \dim Z \geq n-4 + \dim X - (\dim X - 1 - n) = 2n-3.
$$
Now let $\mathcal U_z$ be the universal family over $\mathcal H_z$:
\begin{center}
\begin{tikzcd}
\mathcal U_z \arrow[d,"\pi",swap] \arrow[rr,"ev"] & & E_z \\
\mathcal H_z & &
\end{tikzcd}
\end{center}
By the inequality $\dim \mathcal H_z \geq 2n-3$, we have $\dim \mathcal U_z \geq 2n-2$, and we assume by contradiction that $ev$ is not surjective. Hence $ev(\mathcal U_z)$, which is the locus covered in $E_z$ by the minimal curves, has dimension at most $\dim E_z - 1 = n-1$. Hence the general fibre of $ev$ has dimension $n-1$ or more.

Pick a general point $x\in ev(\mathcal U_z)$ and denote $\mathcal H_{z,x} = \pi(ev^{-1}(x))$. This is the space parametrizing the minimal curves through $x$, and it is birational to $ev^{-1}(x)$: indeed, any member of $\mathcal H_{z,x}$ is an $f$-minimal curve $\gamma$ through $x$, and its only preimage by $\pi|_{ev^{-1}(x)}$ is $(x,\gamma)$. Let $\mathcal U_{z,x} = \pi^{-1}(\mathcal H_{z,x})$, then we have
$$
\dim \mathcal U_{z,x} = \dim \mathcal H_{z,x} + 1 \geq n.
$$
Since the image of $ev$ has dimension at most $n-1$, the fibres of $\mathcal U_{z,x} \to E_z$ have dimension at least $1$. Given a point $y\in ev(\mathcal U_z) - \left\{ x \right\}$, the fibre over $y$ inside $\mathcal U_{z,x}$ contains a positive dimensional family of curves through the two fixed points $x$ and $y$. By the bend-and-break lemma, this family admits a reducible member or a nonreduced member, which is not possible since $\mathcal H_z$ is unsplit by \color{purple}Lemma \ref{cor:constant degree on defofamily} \color{black}and the length hypothesis $-K_X\cdot \Gamma = l(f)$.

The conclusion follows that $ev$ is surjective onto $E_z$. This holds for any fibre $E_z$ of dimension $n$, hence for the universal family $\mathcal U$ over $\mathcal H$,
\begin{center}
\begin{tikzcd}
\mathcal U \arrow[rr] \arrow[d] & & E \\
\mathcal H & & 
\end{tikzcd}
\end{center}
the evaluation morphism $\mathcal U \to E$ is surjective onto the equidimensional locus $E_\mathrm{eq}$.
\end{proof}

Now, under the conditions of \color{purple}Setup \ref{setup:setup}\color{black}, we investigate the generically reduced fibres of dimension $n = \dim E - \dim Z$.

\begin{proposition}
\label{lem:generalfibre}
Let $E_z\subset E$ be a generically reduced fibre of dimension $n$ over a smooth point $z$ of $Z$, and let $\Gamma \subset E_z$ be an $f$-minimal curve. Then one of the following cases occurs:
\begin{enumerate}
\item[$(i)$] $-E\cdot \Gamma = 2$ and $E_z \simeq \bbP^n$,
\item[$(ii)$] $-E\cdot \Gamma = 1$.
\end{enumerate}
Moreover, in case $(ii)$, if $E_z$ is irreducible then it is isomorphic to a normal quadric.
\end{proposition}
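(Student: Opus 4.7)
The plan is twofold: first bound $-E\cdot\Gamma\leq 2$ by bend-and-break inside the fibre $E_z$, then identify $E_z$ in each of the two surviving cases via the classical characterizations recalled in the introduction.

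\textbf{Step 1 (bound).} As in the proof of \color{purple}Lemma \ref{lem:all fibres}\color{black}, smoothness of $z$ in $Z$ together with $E_z$ having the expected dimension $n$ makes $E_z$ locally complete intersection in $X$, so $K_{E_z}$ is Cartier and adjunction gives $-K_{E_z}\cdot\Gamma=(n-1)+(-E\cdot\Gamma)$; Kleiman's criterion ensures $-E|_{E_z}$ is ample, hence $-E\cdot\Gamma\geq 1$. We may assume that $\Gamma$ meets the smooth locus of $E_z$ thanks to \color{purple}Lemma \ref{lem:covered}\color{black}. Kollár's estimate for morphisms $\bbP^1\to E_z$ with two marked points sent to two prescribed general points of $\Gamma$, modulo the $1$-dimensional group of automorphisms of $\bbP^1$ fixing those points, yields a family of deformations of $\Gamma$ in $E_z$ through two fixed general points of dimension at least $-K_{E_z}\cdot\Gamma-(n+1)=-E\cdot\Gamma-2$. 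If $-E\cdot\Gamma\geq 3$, this dimension is positive, and Mori's bend-and-break produces a reducible or nonreduced degeneration of $\Gamma$; by \color{purple}Corollary \ref{cor:all contracted}\color{black} each component is a rational curve contracted by $f$, hence has $-K_X$-degree at least $n-1$ by the length hypothesis, so the total would exceed $n-1=-K_X\cdot\Gamma$, a contradiction valid because $n>1$. Hence $-E\cdot\Gamma\in\{1,2\}$.

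\textbf{Step 2 (identification).} Because $f$ is elementary, every rational curve $C\subset E_z$ is contracted and its class is a positive multiple of $[\Gamma]$ in $N_1(X)$; writing $[C]=\lambda[\Gamma]$, the length hypothesis forces $\lambda\geq 1$, so $-K_{E_z}\cdot C\geq -K_{E_z}\cdot\Gamma$ for every rational $C\subset E_z$. In case $(i)$, this lower bound equals $n+1=\dim E_z+1$, and the Cho-Miyaoka-Shepherd-Barron/Kebekus characterization yields $E_z\simeq\bbP^n$. In case $(ii)$ with $E_z$ irreducible, the lower bound equals $n$; the Miyaoka/Dedieu-Höring characterization then leaves only $\bbP^n$ or a normal quadric, and the projective space is incompatible with the value $-K_{E_z}\cdot\Gamma=n$ forced by $-E\cdot\Gamma=1$, so $E_z$ must be a normal quadric.

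\textbf{Main obstacle.} The characterization results cited above are stated for \emph{smooth} Fano varieties, whereas $E_z$ is only known to be locally complete intersection and a priori may be singular or non-normal. Reconciling this is the heart of the argument; it will likely require either a direct verification that $E_z$ is smooth -- via a Kebekus-style tangent-map technique applied to the abundant family of minimal rational curves through a general point -- or an appeal to suitable extensions of these characterizations to mildly singular Fanos.
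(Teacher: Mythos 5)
Your Step 1 is essentially correct and close in spirit to the paper's argument: the paper obtains $-E\cdot\Gamma\le 2$ by comparing the lower bound $\dim\mathcal H\ge 2n-4-E\cdot\Gamma$ for an unsplit deformation family of $\Gamma$ inside $E_z$ with the upper bound $2n-2$ from \cite[Theorem 0.1]{CMSB}, whereas you run bend-and-break through two fixed points directly; both routes rest on the same unsplitness observation and either works.

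The genuine gap is Step 2, and it is exactly the obstacle you flag at the end: the characterizations of $\bbP^n$ and of quadrics quoted in the introduction are stated for \emph{smooth} Fano varieties, while $E_z$ is only known to be a local complete intersection, a priori singular and even non-normal. Neither of your proposed repairs can work. Proving $E_z$ smooth is impossible in case $(ii)$, where the asserted conclusion is only that $E_z$ is a \emph{normal} quadric (a quadric cone is a legitimate outcome); and the available ``singular'' extensions of these characterizations require normality as an input, so the real mathematical content of the proposition is precisely to control the non-normal locus of $E_z$. This is what the paper's proof does and what is missing from yours. The applicable form of \cite[Theorem 0.1]{CMSB} only says that a fibre carrying an unsplit family of minimal curves of dimension $2n-2$ is \emph{normalized} by $\bbP^n$; one must then compare $\nu^*K_{E_z}$ with $K_{\bbP^n}$ through the conductor divisor $\mathcal D$ of Lemma \ref{lem:conductor}. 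In case $(i)$ the degrees match on a line, so $\mathcal D=0$, hence $E_z$ is regular in codimension one and, being Cohen--Macaulay, normal, giving $E_z\simeq\bbP^n$. In case $(ii)$ one must first \emph{rule out} the possibility $\dim\mathcal H_z=2n-2$: if it held, $\mathcal D$ would be a hyperplane, and restricting to a curve cut out by general members of $|-dE|_{E_z}|$ with $d$ odd would produce a conductor of odd degree on a Gorenstein curve, contradicting the parity statement of Lemma \ref{lem:conductor}. Knowing $\dim\mathcal H_z=2n-3$, normality of $E_z$ is then forced by intersecting $K_{E_z'}-n\nu^*E$ with $(-\nu^*E)^{n-1}$ and invoking \cite[Proposition 2.13]{Hö}, and only afterwards does the Kobayashi--Ochiai-type statement \cite[Theorem 3.1.6]{BS}, valid for normal polarized varieties, identify $E_z$ as a quadric. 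Since none of this appears in your proposal, the identification of $E_z$ in both cases remains unproved.
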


\begin{proof}
First, consider $F\subset E$ any generically reduced fibre of dimension $n$ over a smooth point of $Z$ and $\Gamma \subset F$ an $f$-minimal curve meeting the smooth locus of $F$. The existence of such a curve is ensured by \color{purple}Lemma \ref{lem:covered}\color{black}.

Since $f(F)$ is a smooth point of $Z$, the fibre $F$ is locally a complete intersection in $X$, and by adjunction we have $K_{F} = K_E|_{F} = (K_X+E)|_{F}$, hence $-K_F$ is ample. Moreover, since $\Gamma$ meets the smooth locus of $F$, by \cite[Theorem II.1.3]{Kol} we have the existence of a deformation family $\mathcal H$ of $\Gamma$ in $F$ such that
$$
\dim \mathcal H  \geq 2n-4 - E\cdot \Gamma.
$$
But $\mathcal H$ is unsplit by the length hypothesis and \color{purple}Lemma \ref{cor:constant degree on defofamily}\color{black}, so we know thanks to \cite[Theorem 0.1]{CMSB} that $\dim \mathcal H$ is bounded from above by $2n-2$, and that this bound is reached if and only if $F$ is normalized by $\bbP^n$. Indeed, in that case $\mathcal H$ induces a complete family of rational curves over the normalization $F'$ of $F$ which have minimal degree with respect to the pullback of $-K_F$, and this implies $F'\simeq \bbP^n$ with $\Gamma \subset F$ the image of a line. In particular, $-E\cdot \Gamma$ can only equal $1$ or $2$.

$\bullet$ Let's examine the situation $(i)$ $-E\cdot \Gamma = 2$. By semicontinuity, for any generically reduced fibre $E_z$ of dimension $n$ over a smooth point of $Z$ and $\Gamma \subset E_z$ a minimal curve meeting the normal locus of $E_z$, the deformations of $\Gamma$ inside $E_z$ form at least one family of dimension $2n-2$, and therefore, by \cite[Theorem 0.1]{CMSB}, $E_z$ is normalized by $\bbP^n$ with $\Gamma$ the image of a line $\ell$:
$$
\nu : \bbP^n \to E_z.
$$
As $E_z$ is a local complete intersection in the smooth variety $X$, it is Cohen-Macaulay and Gorenstein. By \color{purple}Lemma \ref{lem:conductor} \color{black}the conductor divisor $\mathcal D$ of this normalization is such that
$$
\nu^* K_{E_z} \simeq K_{\bbP^n} + \mathcal D.
$$
By adjunction, we have
$$
-K_{E_z}\cdot \Gamma = (-K_X-E)\cdot \Gamma = n+1.
$$
Hence we have $-K_{E_z}\cdot \Gamma = n+1 = -K_{\bbP^n}\cdot \ell$. As numerical equivalence implies linear equivalence on $\bbP^n$, the following equality of Cartier divisors holds:
$$
\nu^*K_{E_z} = K_{\bbP^n}.
$$
So the conductor $\mathcal D$ is trivial. Its support is the nonnormal locus of $E_z$ (this is an application of \color{purple}Lemma \ref{lem:conductor}\color{black}, as $E_z$ is Cohen-Macaulay and Gorenstein). In other words $E_z$ is normal in codimension $1$, and since it is Cohen-Macaulay we have $E_z \simeq \bbP^n$.

$\bullet$ Now let's see what happens in the situation $(ii)$ $-E\cdot \Gamma = 1$, under the hypothesis that $E_z$ is an irreducible fibre of dimension $n$ above a smooth point of $Z$. We pick $\Gamma \subset E_z$ a minimal curve meeting the normal locus of $E_z$ and we denote by $\mathcal H_z$ a deformation family of $\Gamma$ inside $E_z$. By semicontinuity, the dimension of $\mathcal H_z$ can be equal to $2n-2$ or $2n-3$. Assume by contradiction that $\dim \mathcal H_z = 2n-2$. In particular, by \cite[Theorem 0.1]{CMSB} the normalization of $E_z$ is isomorphic to $\bbP^n$, with $\Gamma$ the image of a line $\ell$:
$$
\nu : \bbP^n \to E_z.
$$
From \color{purple}Lemma \ref{lem:conductor}\color{black}, the conductor divisor $\mathcal D$ is such that
$$
\nu^*K_{E_z} \simeq K_{\bbP^n} + \mathcal D.
$$
Moreover, since $E_z$ is locally a complete intersection, it is Gorenstein, and by adjunction we have
$$
-K_{E_z}\cdot \Gamma = (-K_X-E)\cdot \Gamma = n = -K_{\bbP^n}\cdot \ell - 1.
$$
Hence the conductor divisor $\mathcal D$ on $\bbP^n$ satisfies $\mathcal D\cdot \ell = 1$, which tells us that $\mathcal D$ is a hyperplane of $\bbP^n$. In addition $\nu^*(-E|_{E_z})$ is also a hyperplane for the same reason.

The divisor $-E|_{E_z}$ being ample, we may consider an irreducible curve $\gamma \subset E_z$ given as a complete intersection of general divisors of the linear system $|-dE|_{E_z}|$ for $d\gg 0$ an odd integer. This curve meets transversally the image of $\mathcal D$ and if $\gamma' \subset \bbP^n$ stands for its proper transform, the restriction $\gamma' \to \gamma$ is the normalization of $\gamma$. By the adjunction formula 
$$
\begin{array}{rcl}
K_{\gamma'} + \mathcal D|_{\gamma'} & = & (K_{\bbP^n} - d(n-1)\nu^*E + \mathcal D)|_{\gamma'} \\
& = & \nu^*(K_{E_z} - d(n-1)E)|_{\gamma'} \\
& = & (\nu|_{\gamma'})^*K_\gamma,
\end{array}
$$
in other words the conductor divisor of $\gamma' \to \gamma$ is $\mathcal D|_{\gamma'}$. Since $\mathcal D$ and $\nu^*(-E|_{E_z})$ are hyperplane divisors on $\bbP^n$, The divisor $\mathcal D|_{\gamma'}$ has degree $d^{n-1}$, which is odd. This is not possible, since the conductor divisor of a curve always has an even degree, by \color{purple}Lemma \ref{lem:conductor}\color{black}.

So, under the hypothesis that $E_z$ is irreducible and $-E\cdot \Gamma = 1$, we have $\dim \mathcal H_z = 2n-3$ for every deformation family $\mathcal H_z$ of $\Gamma$ inside $E_z$. It remains to be proven that $E_z$ is a normal quadric. Since $K_{E_z}\cdot \Gamma = nE\cdot \Gamma$, the restriction $nE|_{E_z}$ is linearly equivalent to $K_{E_z}$, as a consequence of \cite[Theorem 7.39.c]{Deb01}. Moreover $-E|_{E_z}$ is ample and the normalization $\nu : E_z' \to E_z$ is such that
$$
\begin{array}{rcl}
(K_{E_z'}-n\nu^*E)\cdot (-\nu^*E)^{n-1} & = & (K_{E_z'}-\nu^*K_{E_z})\cdot (-\nu^*E)^{n-1} \\
 & = & -\mathcal D \cdot (-\nu^*E)^{n-1}.
\end{array}
$$
Assume by contradiction that $E_z$ is nonnormal. We have $\mathcal D >0$, or else $E_z$ would be regular in codimension one and therefore normal, since it is Cohen-Macaulay. This yields
$$
(K_{E_z'}-n\nu^*E)\cdot (-\nu^*E)^{n-1} < 0,
$$
in which case there exists a birational morphism $E_z' \to \bbP^n$ such that $-E|_{E_z}$ is the pullback of the hyperplane polarization, by \cite[Proposition 2.13]{Hö}. This is not possible, as $\dim \mathcal H_z = 2n-3$, whereas the unique family of minimal curves (i.e., lines) in $\bbP^n$ has dimension $2n-2$. We may thus conclude that $E_z$ is normal, with the equality of ample Cartier divisors $-K_{E_z} = -nE|_{E_z}$. By \cite[Theorem 3.1.6]{BS}, the generalization of a result by Kobayashi and Ochiai, we may conclude that $E_z$ is isomorphic to a quadric. 
\end{proof}

\section{Main theorem}

\subsection{The case $-E\cdot \Gamma = 2$}

Under the conditions of \color{purple}Setup \ref{setup:setup}\color{black}, we assume that we have $-E\cdot \Gamma = 2$ for every $f$-minimal curve $\Gamma$. The goal is to exhibit a projective bundle as a birational model for the locus covered by the $n$-dimensional fibres. We recall the notation 
$$
E_\mathrm{eq} \to Z_\mathrm{eq}
$$
for the equidimensional locus.

\begin{theorem}
\label{thm:1st half}
In \color{purple}Setup \ref{setup:setup} \color{black}and under the condition $-E\cdot \Gamma = 2$, all the fibres of $E_\mathrm{eq} \to Z_\mathrm{eq}$ are normalized by $\bbP^n$. The normalization $E'$ of $E_\mathrm{eq}$ is a family of projective spaces over the normalization $Z'$ of $Z_\mathrm{eq}$ which is locally trivial for the analytic topology, such that the following diagram is commutative:
\begin{center}
\begin{tikzcd}
E' \arrow[rr] \arrow[d] & & E_\mathrm{eq} \arrow[d] \\
Z' \arrow[rr] & & Z_\mathrm{eq}
\end{tikzcd}
\end{center}
If moreover $n$ is even, $E_\mathrm{eq}$ is isomorphic to the projectivization of a rank $n+1$ vector bundle over $Z_\mathrm{eq}$.
\end{theorem}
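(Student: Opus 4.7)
The plan is to address the theorem's three assertions in sequence. Stage~1 extends Proposition~\ref{lem:generalfibre} to every fibre of $E_\mathrm{eq}\to Z_\mathrm{eq}$. For each $z\in Z_\mathrm{eq}$, pick an $f$-minimal curve $\Gamma\subset E_z$ (existence guaranteed by Lemma~\ref{lem:all fibres}) and consider the relative deformation family of $\Gamma$ inside $E_\mathrm{eq}\to Z_\mathrm{eq}$. Over a general point of $Z_\mathrm{eq}$ the fibre of this family has dimension exactly $2n-2$, by Proposition~\ref{lem:generalfibre} combined with \cite[Theorem 0.1]{CMSB}. Upper semicontinuity of fibre dimension forces $\dim\mathcal H_z\geq 2n-2$ everywhere, while the length hypothesis and Lemma~\ref{cor:constant degree on defofamily} ensure that $\mathcal H_z$ is unsplit, so the CMSB upper bound applies in the reverse direction and gives equality; the same theorem then furnishes the normalization of $E_z$ by $\bbP^n$, with $\Gamma$ the image of a line.

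Stage~2 passes to the normalizations $E'\to Z'$. The resulting morphism is proper and equidimensional with every scheme-theoretic fibre isomorphic to $\bbP^n$, hence flat (constant Hilbert polynomial over the normal base $Z'$), and since $\bbP^n$ has no nontrivial infinitesimal deformations the Fischer--Grauert theorem yields analytic local triviality of $E'\to Z'$ with fibre $\bbP^n$.

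Stage~3 assumes $n$ even. Then $\gcd(n+1,2)=1$, and Bézout furnishes integers $a,b$ with $-a(n+1)-2b=1$. Both $K_E$ and $E|_E$ are Cartier on $E$ (as $X$ is smooth and $E$ is a Cartier divisor), and on a general fibre of $\pi : E_\mathrm{eq}\to Z_\mathrm{eq}$ they restrict respectively to $\mathcal O(-n-1)$ and $\mathcal O(-2)$; hence $L := aK_E|_{E_\mathrm{eq}} + bE|_{E_\mathrm{eq}}$ restricts to $\mathcal O(1)$ on every fibre, using Stage~2 to propagate from the generic fibre. Cohomology and base change show that $\pi_\ast L$ is locally free of rank $n+1$ on $Z_\mathrm{eq}$, and Lemma~\ref{lem:evaluationmap} produces a surjective evaluation map $\pi^\ast\pi_\ast L\to L$, inducing a morphism $\phi: E_\mathrm{eq}\to\bbP(\pi_\ast L)$. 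Fibrewise, $\phi$ is a finite birational map onto $\bbP^n$; by Zariski's main theorem each $\phi|_{E_z}$ is an isomorphism, which forces $\phi$ itself to be a $Z_\mathrm{eq}$-isomorphism.

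The main obstacle I expect is Stage~1: upper semicontinuity must be applied over the possibly nonnormal base $Z_\mathrm{eq}$, and the CMSB characterization has to be invoked for fibres $E_z$ that may be reducible or nonreduced. One must verify that the upper bound $\dim\mathcal H_z\leq 2n-2$ persists in this broader setting and that its attainment still forces normalization by $\bbP^n$---slightly more delicate than the form stated in \cite[Theorem 0.1]{CMSB}, and requiring care with the reduction of $E_z$ and its irreducible components.
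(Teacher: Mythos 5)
Your outline follows the same three-step skeleton as the paper (extend the fibre description to all of $E_\mathrm{eq}\to Z_\mathrm{eq}$, pass to normalizations and invoke Fischer--Grauert, then treat $n$ even via a polarization of degree $1$ on minimal curves), but the two points you yourself flag as ``obstacles'' are exactly where the paper's proof has real content, and your proposal does not supply the arguments that close them. In Stage~1, semicontinuity does give $\dim\mathcal H_z\geq 2n-2$ for every fibre, but the upper bound $\dim\mathcal H_z\leq 2n-2$ from \cite[Theorem 0.1]{CMSB} and the conclusion ``normalized by $\bbP^n$'' are only available for an \emph{irreducible, generically reduced} $F$; if $E_z$ is reducible or nonreduced you can at best apply CMSB to the single component $D_1$ containing the degenerate line, and nothing in your argument excludes further components or multiple structures. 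The paper's fix (\color{purple}Lemma \ref{lem:projspacedegen}\color{black}) is an intersection-theoretic one: after passing to the normalization $Z'$ of the base so that \cite[I.3.12]{Kol} applies, one has $(L|_{E_z})^n=d^n$ for the relative polarization $L$, while a reducible or non-generically-reduced fibre would force $(L|_{E_z})^n>(L|_{D_1})^n=d^n$ once CMSB identifies $D_1$ as normalized by $\bbP^n$ --- a contradiction. This degree computation is the missing idea; note also that it requires working over the \emph{normalization} $Z'$, whereas you propose to argue directly over the possibly nonnormal $Z_\mathrm{eq}$. The paper then gets the family of projective spaces from Kollár's simultaneous normalization \cite[Theorem 12]{Kol3}, another ingredient absent from your sketch.

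In Stage~3 the construction of a degree-$1$ Cartier class is fine (your $aK_E+bE$ plays the role of the paper's $J=-K_X+(\tfrac n2-1)E$), but replacing the surjectivity of the evaluation map by ``cohomology and base change'' does not work at that stage: you do not know that $E_\mathrm{eq}\to Z_\mathrm{eq}$ is flat, nor that $h^0(E_z,L|_{E_z})$ is constant, since the fibres are a priori only \emph{normalized} by $\bbP^n$ and may be nonnormal --- establishing $h^0(E_z,L|_{E_z})=n+1$ for such a fibre is essentially equivalent to what you are trying to prove. The paper instead takes $J$ as a divisor on the smooth ambient $X$, observes that $K_X+(n-1)J$ supports the contraction, and applies Andreatta--Wiśniewski (\color{purple}Theorem \ref{thm:AW}\color{black}, with $\dim E_z=n=r+1$ in the birational case) to get surjectivity of $f^*f_*J\to J$ along \emph{every} fibre with no flatness input; the induced map $\theta:E_z\to\bbP^m$ together with the normalization $\bbP^n\to E_z$ then forces $E_z\simeq\bbP^n$ and $E_\mathrm{eq}\simeq\bbP_{Z_\mathrm{eq}}(f_*J)$. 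You should either import Theorem \ref{thm:AW} or provide a genuine substitute for it; as written, Stage~3 has a gap at precisely this point.
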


The proof is broken down into the following \color{purple}Lemmas \ref{lem:projspacedegen}\color{black}, \color{purple}\ref{lem:projbundle} \color{black}and \color{purple}\ref{lem:evenprojbundle}\color{black}.

\begin{lemma}
\label{lem:projspacedegen}
Let $E \to Z$ be an equidimensional fibration onto a normal variety with $n = \dim E - \dim Z$, such that the general fibre is a projective space. 

For such a general fibre, let $\ell$ be a line inside it. If there exists a relatively ample line bundle $L$ on $E$ such that for any contracted rational curve $\Gamma$ we have
$$
L\cdot \Gamma \geq d := L\cdot \ell,
$$
then all the fibres are irreducible and generically reduced. Moreover, there exists a finite and birational morphism $E' \to E$ where $E'$ is a family of projective spaces over $Z$. If $E$ is normal, then we have $E' \simeq E$.
\end{lemma}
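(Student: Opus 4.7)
The plan is first to show, via an unsplit family of lines combined with the Cho-Miyaoka-Shepherd-Barron criterion, that every fibre of $E \to Z$ is normalized by $\bbP^n$; then to produce $E' \to E$ as the Stein factorization of the evaluation map from the universal family. Let $\overline{\mathcal{H}}$ denote the irreducible component of the relative Chow scheme of $E/Z$ whose generic point parametrizes the line $\ell$. The hypothesis $L \cdot \Gamma \geq d$ on every contracted rational curve, combined with the constancy of $L$-degree on $\overline{\mathcal{H}}$ (Lemma \ref{cor:constant degree on defofamily}), forces $\overline{\mathcal{H}}$ to be unsplit: any reducible or non-reduced member $\sum m_i \Gamma_i$ would satisfy $d = \sum m_i (L \cdot \Gamma_i) \geq (\sum m_i)\, d \geq 2d$, contradicting $d > 0$.

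Next, the projection $\overline{\mathcal{H}} \to Z$ is proper and dominant, hence surjective, and upper semi-continuity of fibre dimension yields $\dim \mathcal{H}_z \geq 2n-2$ for every $z \in Z$. Consider the universal $\bbP^1$-bundle $\mathcal{U} \to \overline{\mathcal{H}}$ together with the evaluation $ev : \mathcal{U} \to E$, which is dominant onto the irreducible variety $E$ and therefore surjective; so every fibre $E_z$ is covered by members of $\mathcal{H}_z$. Replaying the argument used in the proof of Proposition \ref{lem:generalfibre}, bend-and-break combined with unsplitness rules out $\dim \mathcal{H}_z > 2n-2$, and \cite[Theorem 0.1]{CMSB} then gives $\dim \mathcal{H}_z = 2n-2$ together with the fact that each fibre $E_z$ is irreducible and normalized by $\bbP^n$, with $\ell$ the image of a line. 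The generic reducedness of $E_z$ follows from comparing the top intersection $L^n \cdot [E_z]$, which equals $d^n$ when computed on the normalization $\bbP^n$, with the value $d^n$ attached to the general fibre via the Chow-class interpretation of $\overline{\mathcal{H}}$: any multiplicity $m \geq 2$ along the unique component of $E_z$ would inflate this number.

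For the construction of $E'$, I would take the Stein factorization of $ev$,
\[
\mathcal{U} \xrightarrow{\ \sigma\ } E' \xrightarrow{\ \nu\ } E.
\]
Since the generic fibre of $ev$ over $E$ is the connected $\bbP^{n-1}$ of lines through a general point of $\bbP^n$, the map $\nu$ is finite and birational. Fibrewise, the Stein factorization of $ev_z : \mathcal{U}_z \to E_z$ reproduces the normalization $\bbP^n \to E_z$, so $E'_z \simeq \bbP^n$ and $E' \to Z$ is a family of projective spaces. When $E$ is normal, $\nu$ is a finite birational morphism into a normal variety, hence an isomorphism by Zariski's main theorem. The main obstacle will be verifying that the Stein factorization commutes appropriately with base change to individual fibres of $E \to Z$, so that the fibrewise normalizations genuinely assemble into the global $E'$; here the specific structure of $\mathcal{U} \to E$ (connected generic fibre, fibrewise $\bbP^{n-1}$-bundle onto $\bbP^n$) is what makes the argument go through, with the generic-reducedness step as a secondary technical point.
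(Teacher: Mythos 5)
Your first half --- unsplitness of the family of lines, surjectivity of the evaluation map onto every fibre, the lower bound $\dim \mathcal H_z \geq 2n-2$, and the Cho--Miyaoka--Shepherd-Barron criterion --- is essentially the paper's argument. One presentational muddle: you attribute irreducibility of $E_z$ to \cite[Theorem 0.1]{CMSB}, but that theorem only controls the single component carrying a $(2n-2)$-dimensional family of minimal curves. Both irreducibility and generic reducedness come from the intersection-number identity $(L|_{E_z})^n = \sum m_i (L|_{D_i})^n$ of \cite[I.3.12]{Kol}: once each component $D_i$ is known to be normalized by $\bbP^n$ with $\nu^*L \simeq \mathcal O_{\bbP^n}(d)$, the left side equals $d^n$ while the right side equals $(\sum m_i)\,d^n$, forcing $\sum m_i = 1$. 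You do invoke this computation for generic reducedness, so this part is recoverable.

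The genuine gap is in the construction of $E'$. Your Stein factorization of $ev$ does produce the normalization of $E$ (the general $ev$-fibre is the connected $\bbP^{n-1}$ of lines through a point, so the finite part is birational, and $\mathcal U$ is normal, so the intermediate space is normal). But the lemma needs every fibre of $E' \to Z$ to be $\bbP^n$, and your justification --- that the Stein factorization of $ev_z : \mathcal U_z \to E_z$ reproduces the normalization $\bbP^n \to E_z$ --- does not transfer to $E'$: neither Stein factorization nor normalization commutes with restriction to fibres, so the fibre $E'_z$ of the normalization of $E$ over $z$ is a priori just some finite birational cover of $E_z$ and can fail to be normal. You flag this yourself as ``the main obstacle'' but offer no mechanism to overcome it, and the vague appeal to ``the specific structure of $\mathcal U \to E$'' is not one. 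The paper resolves exactly this point by invoking Kollár's simultaneous normalization theorem \cite[Theorem 12]{Kol3}, which supplies a finite birational $E' \to E$ whose fibres over $Z$ are all normal (applicable here since every fibre is normalized by $\bbP^n$ polarized by $\mathcal O_{\bbP^n}(d)$, so the relevant Hilbert polynomials are constant); each normal fibre $E'_z$ then factors through the normalization $\bbP^n \to E_z$ and is isomorphic to $\bbP^n$ by Zariski's main theorem. Without this ingredient your argument does not establish that $E' \to Z$ is a family of projective spaces, nor the final assertion that a normal $E$ has all fibres isomorphic to $\bbP^n$.
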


\begin{proof}
The proof that all fibres are irreducible and generically reduced is essentially the same as that of \cite[Proposition 3.1]{HN}, but without the assumption that $E$ is normal. The argument is the following: we assume by contradiction that there exists a fibre $E_z$ which is either reducible or not generically reduced:
$$
E_z = m_1D_1 + \cdots + m_sD_s
$$
where all the $m_i'$s are nonzero. Without loss of generality we may assume $m_1 \geq 2$ or $s \geq 2$. Either way, by \cite[I.3.12]{Kol} we have
\begin{equation}
\label{eq:ineq}
d^n = (L|_{E_z})^n > (L|_{D_1})^n.
\end{equation}
Now consider $C\subset E_z$ a $1$-cycle obtained as a degeneration of lines in the general fibre, so that $L\cdot C = d$. We deduce that $C$ is irreducible and reduced from the length condition on the relatively ample line bundle $L$, namely $L\cdot \Gamma \geq d$ for any reduced and irreducible component $\Gamma$ of $C$. We may assume without loss of generality that the curve $C$ lies in $D_1$. By semicontinuity, a deformation family of $C$ inside $D_1$ has dimension $2n-2$ or more, and \cite[Theorem 0.1]{CMSB} ensures that $D_1$ is normalized by a projective space, with $C$ the image of a line. Since $L\cdot C = d$, we obtain that the pullback of $L$ by the normalization morphism is isomorphic to $\mathcal O_{\bbP^n}(d)$, and thus
$$
(L|_{D_1})^n = d^n
$$
which is a contradiction to the inequality \color{purple}(\ref{eq:ineq})\color{black}.

As a result, all the fibres are generically reduced and irreducible. If $E_z$ is a fibre, and $C\subset E_z$ is a curve obtained as a degeneration of lines, we have $L\cdot C = d$ and a deformation family of $C$ in $E_z$ has dimension $2n-2$ or more. By \cite[Theorem 0.1]{CMSB} again, the normalization of $E_z$ is isomorphic to $\bbP^n$, with $C$ the image of a line :
$$
\nu : \bbP^n \to E_z.
$$
Moreover, the polarization $L$ has degree $d$ on the lines, so $\nu^*L \simeq \mathcal O_{\bbP^n}(d)$.

Now we consider the simultaneous normalization of $E\to Z$ whose existence is given by \cite[Theorem 12]{Kol3}. This is a finite birational modification $E' \to E$ such that all the fibres of $E' \to Z$ are normal; in this particular case, they are isomorphic to $\bbP^n$. Indeed, any fibre $E_z$ of $E \to Z$ is normalized by $\bbP^n$, and if $E'_z$ denotes the fibre of $E' \to Z$ over the same point, then the finite a birational morphism $E'_z \to E_z$ factors through
$$
E'_z \to \bbP^n
$$
since $E'_z$ is normal. It follows that $E'_z \simeq \bbP^n$ since a finite and birational morphism onto a normal variety is an isomorphism by Zariski's main theorem; see \cite{Zar}.

If $E$ is normal, then $E' \to E$ is an isomorphism as a finite and birational morphism onto a normal variety.
\end{proof}

Under the conditions of \color{purple}Setup \ref{setup:setup}\color{black}, and under the condition $-E\cdot \Gamma = 2$, we want to apply \color{purple}Lemma \ref{lem:projspacedegen} \color{black}to $E_\mathrm{eq} \to Z_\mathrm{eq}$. If $Z_\mathrm{eq}$ is normal, then \color{purple}Lemma \ref{lem:projspacedegen} \color{black}ensures that the normalization of $E_\mathrm{eq}$ is a family of projective spaces over $Z_\mathrm{eq}$. If $Z_\mathrm{eq}$ is not normal, let us consider its normalization:

\begin{lemma}
\label{lem:projbundle}
Let $Z'$ be the normalization of $Z_\mathrm{eq}$ and $E'$ the normalization of $E_\mathrm{eq}$. Then the fibration $E'\to Z'$ is a family of projective spaces which is locally trivial for the analytic topology.

In addition, the normalization of any fibre of $E_\mathrm{eq}\to Z_\mathrm{eq}$ is isomorphic to $\bbP^n$.
\end{lemma}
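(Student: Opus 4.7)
The strategy is to apply Lemma \ref{lem:projspacedegen} to a morphism obtained from $E_\mathrm{eq} \to Z_\mathrm{eq}$ by base change to $Z'$. The universal property of normalization first yields a lift $E' \to Z'$ of the composition $E' \to E_\mathrm{eq} \to Z_\mathrm{eq}$, producing the commutative square in the statement. Let $\tilde E$ denote the irreducible component of the fibre product $E_\mathrm{eq}\times_{Z_\mathrm{eq}} Z'$ dominating $E_\mathrm{eq}$; then $\tilde E \to E_\mathrm{eq}$ is finite and birational, so the normalization of $\tilde E$ coincides with $E'$.

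I would then verify the hypotheses of Lemma \ref{lem:projspacedegen} applied to $\tilde E \to Z'$: the base $Z'$ is normal by construction, the morphism is equidimensional with general fibre $\bbP^n$ by Proposition \ref{lem:generalfibre}, and the pullback $L$ of $-E|_{E_\mathrm{eq}}$ to $\tilde E$ is relatively ample, of degree $d = 2$ on a line in the general fibre. The required inequality $L \cdot \Gamma \geq 2$ on any contracted rational curve $\Gamma \subset \tilde E$ follows because contracted numerical classes all lie on the single extremal ray of $f$: the ratio $(-E \cdot \Gamma)/(-K_X \cdot \Gamma)$ is constant and equal to $2/(n-1)$ (as evaluated on any $f$-minimal curve), and the length hypothesis $-K_X \cdot \Gamma \geq n-1$ then yields $-E \cdot \Gamma \geq 2$. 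The lemma produces a finite birational modification $W \to \tilde E$ with $W \to Z'$ a family of projective spaces; $W$ is normal, so it coincides with $E'$, and hence $E' \to Z'$ is itself a family of projective spaces.

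For the normalization-of-fibres claim, the simultaneous normalization invoked in Lemma \ref{lem:projspacedegen} ensures that for each $z' \in Z'$ the fibre $E'_{z'} \simeq \bbP^n$ is the normalization of $\tilde E_{z'}$, and this latter equals set-theoretically the fibre $E_z$ of $E_\mathrm{eq} \to Z_\mathrm{eq}$ where $z$ is the image of $z'$. Hence every fibre $E_z$ has $\bbP^n$ as its normalization. For local triviality in the analytic topology, the family $E' \to Z'$ is flat (all geometric fibres have the same Hilbert polynomial over a reduced Noetherian base) with fibres $\bbP^n$, so it is a Brauer-Severi scheme, which is étale-locally, hence analytically-locally, trivial.

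The main obstacle I anticipate is the careful identification of Kollár's simultaneous normalization of $\tilde E$ with the usual normalization $E'$ of $E_\mathrm{eq}$, and the verification that the base change $Z' \to Z_\mathrm{eq}$ preserves the scheme-theoretic fibres tightly enough to transfer the normalization-of-fibres conclusion back to the original family $E_\mathrm{eq} \to Z_\mathrm{eq}$ without losing information at the non-normal points of $Z_\mathrm{eq}$.
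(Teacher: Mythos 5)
Your proposal is correct in its skeleton and shares the paper's main idea --- normalize the base and feed the family into Lemma \ref{lem:projspacedegen}, with the degree hypothesis $-E\cdot\Gamma\geq 2$ obtained from proportionality of contracted classes on the extremal ray (your version of this verification is actually more explicit than the paper's) --- but the two arguments diverge in execution. The paper does not base change: it applies Lemma \ref{lem:projspacedegen} directly to $E'\to Z'$ and invokes the final clause of that lemma (``if $E$ is normal then $E'\simeq E$'') to conclude that $E'$ itself is the family of projective spaces, which avoids your detour through $\tilde E$, the identification of Koll\'ar's simultaneous normalization $W$ with $E'$, and the (unjustified but true) normality of $W$. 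The real difference is in the second claim. You transfer ``normalization of the fibre'' from $\tilde E\to Z'$ back to $E_\mathrm{eq}\to Z_\mathrm{eq}$ via the assertion that $\tilde E_{z'}$ equals $E_z$ set-theoretically; this is the weak point you yourself flag. When $\nu^{-1}(z)$ has several points, $\tilde E_{z'}$ injects into but need not surject onto $E_z$, and even a finite set-theoretic bijection must still be argued to be birational (true in characteristic $0$, but it should be said); you also lean on the fibrewise description of the simultaneous normalization, which sits in the proof rather than the statement of Lemma \ref{lem:projspacedegen}. The paper closes exactly this gap differently and more cleanly: it restricts the normalization $\lambda:E'\to E_\mathrm{eq}$ to a fibre to get a finite map $\bbP^n\to E_z$, and computes the degree $\delta$ of $\lambda$ on a general line via $2=-E\cdot\Gamma=\tfrac{1}{\delta}\lambda^*(-E)\cdot\ell=\tfrac{2}{\delta}$, forcing $\delta=1$ and hence birationality, so that $\bbP^n$ factors through and equals the normalization of $E_z$. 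Finally, for local analytic triviality the paper cites Fischer--Grauert where you use flatness plus the Brauer--Severi description; both are acceptable, though your flatness claim should be anchored to the relatively ample polarization $\lambda^*(-E|_{E_\mathrm{eq}})$ so that the Hilbert polynomials of the fibres can actually be compared.
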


\begin{proof}
We know from \color{purple}Proposition \ref{lem:generalfibre} \color{black}that the general fibre of $E' \to Z'$ is a projective space. Moreover, the pullback of the divisor $-E|_{E_\mathrm{eq}}$ to $E'$ satisfies the hypothesis of \color{purple}Lemma \ref{lem:projspacedegen}\color{black}. Indeed, by \color{purple}Proposition \ref{lem:generalfibre}\color{black}, the minimal degree $-E|_E$ on contracted rational curves is reached on the lines on the fibres which are projective spaces. Hence all the fibres of $E' \to Z'$ are projective spaces by \color{purple}Lemma \ref{lem:projspacedegen}\color{black}, since $E'$ is normal. By \cite{FG}, this family of projective spaces over $Z'$ is relatively trivial in the analytic topology. 

In this situation, the $f$-minimal curves in $E_\mathrm{eq}$ are the images of lines. Let us denote by $\lambda$ the normalization morphism $E' \to E_\mathrm{eq}$. The variety $Z'$ being normal, by \cite[I.3.12]{Kol} the degree of $\lambda^*(-E|_{E_\mathrm{eq}})$ on each line contained in a fibre of $E' \to Z'$ is equal to $2$.

Now we consider $E_z\subset E_\mathrm{eq}$ the fibre over a point $z\in Z_\mathrm{eq}$. Let us denote by $\nu : Z' \to Z_\mathrm{eq}$ the normalization of $Z_\mathrm{eq}$ and pick a point $z' \in \nu^{-1}(z)$. The fibre in $E'$ over $z'$ is isomorphic to $\bbP^n$, and since $\lambda : E'\to E_\mathrm{eq}$ is a finite morphism, it yields a finite morphism $\bbP^n \to E_z$. In this situation, the minimal curves which cover $E_z$ (see \color{purple}Lemma \ref{lem:covered}\color{black}) are the images of the lines of $\bbP^n$.

Since $-E\cdot \Gamma = 2$ and $\lambda^*(-E_{E_\mathrm{eq}})\cdot \ell = 2$ for $\ell$ a line contained in any fibre of $E' \to Z'$, and the restriction of $\lambda$ to $\ell$ is a finite morphism, say of degree $\delta$
$$
\lambda|_\ell : \ell \xrightarrow[]{\delta:1} \lambda(\ell),
$$
for $\ell \subset \bbP^n$ a general line and $\Gamma = \lambda(\ell) \subset E_z$ a minimal curve we have
$$
2 = -E\cdot \Gamma = \frac{1}{\delta}\lambda^*(-E)\cdot \ell = \frac{2}{\delta}
$$
therefore $\delta=1$. The morphism $\bbP^n \to E_z$ is thus birational along the general line $\ell \subset \bbP^n$, so it is birational. Since it is finite, for $(E_z)'$ the normalization of $E_z$ we have a factorization $\bbP^n \to (E_z)'$ which is birational and finite, hence an isomorphism.
\end{proof}

If $n$ is even, we may consider the Cartier divisor 
$$
J = -K_X +\left( \frac{n}{2}-1\right) E
$$
which has degree $1$ on the $f$-minimal curves. By \cite[Theorem 7.39.c]{Deb01}, the divisor $E+2J$ is relatively trivial, so $J\sim_f -\frac{1}{2}E$ is relatively ample.

\begin{lemma}
\label{lem:evenprojbundle}
If $n$ is even, the fibration $E_\mathrm{eq} \to Z_\mathrm{eq}$ is a projective bundle, in other words $E_\mathrm{eq}$ is isomorphic to the projectivization of a vector bundle over $Z_\mathrm{eq}$.
\end{lemma}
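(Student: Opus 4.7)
My plan is to construct the rank $n+1$ vector bundle on $Z_\mathrm{eq}$ as the relative pushforward $\mathcal V := g_*L$, where $g = f|_{E_\mathrm{eq}}$ and $L = \mathcal O_X(J)|_{E_\mathrm{eq}}$, and to identify $E_\mathrm{eq}$ with $\bbP(\mathcal V)$ via the associated evaluation map. The key ingredient is the Ancona--Wiśniewski theorem applied to $\mathcal O_X(J)$.

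First, I would verify the hypotheses of \color{purple}Theorem \ref{thm:AW}\color{black}. The computation $J\cdot \Gamma = (n-1) + (n/2-1)(-2) = 1$ on any $f$-minimal curve $\Gamma$ gives $K_X + (n-1)J \equiv_f 0$, and the relation $2J \sim_f -E$ combined with the ampleness of $-E|_E$ implies that $J$ is $f$-ample. Since $f$ is birational and every fibre over $Z_\mathrm{eq}$ has dimension $n = (n-1)+1$, the theorem yields that the evaluation map
\[
f^*f_*\mathcal O_X(J) \longrightarrow \mathcal O_X(J)
\]
is surjective along $E_\mathrm{eq}$. Restricting to $E_\mathrm{eq}$ shows that $L|_{E_z}$ is generated by global sections for every $z \in Z_\mathrm{eq}$.

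Next, I would establish that $h^0(E_z, L|_{E_z}) = n+1$ for every $z \in Z_\mathrm{eq}$. By \color{purple}Lemma \ref{lem:projbundle} \color{black}the normalization $\nu_z : \bbP^n \to E_z$ pulls $L$ back to $\mathcal O_{\bbP^n}(1)$, so $H^0(E_z, L|_{E_z})$ injects into $H^0(\bbP^n, \mathcal O_{\bbP^n}(1)) = k^{n+1}$ and $h^0(L|_{E_z}) \leq n+1$. If this inequality were strict, the morphism $E_z \to \bbP^{h^0-1}$ defined by the globally generated $L|_{E_z}$ would, when composed with $\nu_z$, yield a morphism $\bbP^n \to \bbP^{h^0-1}$ attached to a proper linear subsystem of $|\mathcal O_{\bbP^n}(1)|$. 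Such a subsystem must have a common zero on $\bbP^n$, contradicting that the composition is everywhere defined. Hence $h^0(E_z, L|_{E_z}) = n+1$, and Grauert's theorem on the reduced scheme $Z_\mathrm{eq}$ implies that $\mathcal V := g_*L$ is locally free of rank $n+1$ and commutes with base change; the relative evaluation map $g^*\mathcal V \to L$ is then surjective since $L|_{E_z}$ is globally generated by $\mathcal V \otimes k(z) \simeq H^0(E_z, L|_{E_z})$.

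Finally, the surjection $g^*\mathcal V \to L$ defines a $Z_\mathrm{eq}$-morphism $\phi : E_\mathrm{eq} \to \bbP(\mathcal V)$ with $\phi^*\mathcal O_{\bbP(\mathcal V)}(1) = L$. Fibrewise, $\phi_z$ is attached to the complete linear system $|L|_{E_z}|$ and $\phi_z \circ \nu_z$ is a linear automorphism of $\bbP^n$, so combined with $(L|_{E_z})^n = 1$ we get that $\phi_z$ is a finite birational morphism onto the normal target $\bbP^n$, hence an isomorphism by Zariski's main theorem. Thus every fibre $E_z$ is isomorphic to $\bbP^n$ and $\phi$ is a proper fibrewise iso. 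The main obstacle I expect is upgrading this fibrewise isomorphism to a global one: using the smoothness of $\bbP(\mathcal V) \to Z_\mathrm{eq}$, the properness of $\phi$, and the fact that $\mathcal O_{\bbP(\mathcal V)} \to \phi_*\mathcal O_{E_\mathrm{eq}}$ is a fibrewise isomorphism, one combines Nakayama's lemma with cohomology and base change to deduce $\mathcal O_{\bbP(\mathcal V)} \simeq \phi_*\mathcal O_{E_\mathrm{eq}}$, whence $\phi$ itself is an isomorphism.
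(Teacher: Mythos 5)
Your proposal is correct and follows essentially the same route as the paper: the same divisor $J=-K_X+(\frac n2-1)E$ of degree $1$ on minimal curves, the same appeal to Theorem \ref{thm:AW} for surjectivity of the evaluation map, and the same use of the normalization $\bbP^n\to E_z$ from Lemma \ref{lem:projbundle} to force the morphism defined by $J|_{E_z}$ to be an isomorphism onto $\bbP^n$. The only difference is cosmetic: you pin down $h^0(E_z,L|_{E_z})=n+1$ and invoke Grauert plus cohomology and base change, whereas the paper works with an arbitrary affine-local generating set and then cites the standard fact that a family of projective spaces with a relative hyperplane polarization is the projectivization of the direct image.
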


\begin{proof}
The relatively ample Cartier divisor 
$$
J = -K_X +\left( \frac{n}{2}-1\right) E
$$
has degree $1$ on the $f$-minimal curves, and
$$
K_X + (n-1)J
$$
is relatively trivial by \cite[Theorem 7.39.c]{Deb01}. Thanks to \color{purple}Theorem \ref{thm:AW}\color{black}, if $E_z \subset E_\mathrm{eq}$ is any fibre, the evaluation map $f^*f_*J \to J$ is surjective along $E_z$.

Now let $z\in Z_\mathrm{eq}$ be the image of $E_z$ and $U$ an affine neighbourhood of $z$ in $Y$. The direct image $(f_*J)|_U$ being a coherent sheaf on an affine variety, it is endowed with a surjection of the following form:
$$
\mathcal O_U^{\oplus m+1} \to f_*J|_U.
$$
Then we may take the pullback of this by $f$ and its restriction to $E_z$, yielding a surjection $\mathcal O_{E_z}^{\oplus m+1} \to (f^*f_*J)|_{E_z}$. Since $(f^*f_*J)|_{E_z} \to J|_{E_z}$ is surjective as well, the composition is onto, namely
$$
\mathcal O_{E_z}^{\oplus m+1} \to J|_{E_z}.
$$
Now let $e_0,...,e_m$ be the vectors of the canonical basis of $H^0(E_z,\mathcal O_{E_z}^{\oplus m+1})$ and $\sigma_0,...,\sigma_m$ their images in $H^0(E_z,J|_{E_z})$ ($1 \leq i \leq m+1$). Then the $\sigma_i$'s do not vanish simultaneously, and they induce a morphism
$$
\theta : E_z \to \bbP^m
$$
such that $\theta^*\mathcal O_{\bbP^m}(1) = J|_{E_z}$.

From \color{purple}Lemma \ref{lem:projbundle} \color{black}we know that $E_z$ is normalized by $\bbP^n$. Let us denote $\lambda : \bbP^n \to E_z$ the normalization morphism, and $J' = \lambda^*(J|_{E_z})$. Then $J'$ is a line bundle on $\bbP^n$ which has degree $1$ on the lines, yielding $J' \simeq \mathcal O_{\bbP^n}(1)$. The composition $\theta \circ \lambda$ is a morphism
$$
\bbP^n \to \bbP^m
$$
such that $\lambda^*\theta^*\mathcal O_{\bbP^m}(1) = J' \simeq \mathcal O_{\bbP^n}(1)$. Since $J'$ is ample and $(J')^n = 1$, $\theta \circ \lambda$ is a finite morphism onto a linear subspace of dimension $n$, hence an isomorphism.

From the above, we deduce that $\theta : E_z \to \bbP^n$ is birational and finite, hence an isomorphism. The fibration $E_\mathrm{eq}\to Z_\mathrm{eq}$ is thus a family of projective spaces. It is endowed with a relatively ample polarization $J$ whose restriction to any fibre $E_z$ is $\mathcal O_{\bbP^n}(1)$ via the isomorphism $E_z \simeq \bbP^n$, and the existence of this relative hyperplane polarization ensures $E_\mathrm{eq} \simeq \bbP_{Z_\mathrm{eq}}(f_*J)$.
\end{proof}

\subsection{The case $-E\cdot \Gamma = 1$}

Under the conditions of \color{purple}Setup \ref{setup:setup}\color{black}, we assume now $-E\cdot \Gamma = 1$ for $\Gamma$ any $f$-minimal curve. We aim to construct a quadric bundle as a birational model for $E_\mathrm{eq}\to Z_\mathrm{eq}$. Let us denote $L := \mathcal O_{E_\mathrm{eq}}(-E|_{E_\mathrm{eq}})$.

\begin{theorem}
\label{thm:2nd half}
In \color{purple}Setup \ref{setup:setup} \color{black}and under the condition $-E\cdot \Gamma = 1$, there exist:

\begin{enumerate}
\item[$\bullet$] birational morphisms $E' \to E_\mathrm{eq}$ and $Z' \to Z_\mathrm{eq}$ with $E'$ and $Z'$ normal,

\item[$\bullet$] a rank $n+2$ vector bundle $\mathcal E$ over $Z'$,

\item[$\bullet$] and a quadric bundle $\mathcal Q \subset \bbP_{Z'}(\mathcal E)$,
\end{enumerate}
such that $E'$ is the normalization of $\mathcal Q$ and the following diagram is commutative:
\begin{center}
\begin{tikzcd}
\mathcal Q \arrow[drr] & & E' \arrow[ll] \arrow[rr] \arrow[d] & & E_\mathrm{eq} \arrow[d] \\
& & Z' \arrow[rr] & & Z_\mathrm{eq}
\end{tikzcd}
\end{center}
Moreover, each reducible fibre of $E_\mathrm{eq}\to Z_\mathrm{eq}$ has two irreducible components, the reductions of which are normalized by $\bbP^n$. Each irreducible and generically reduced fibre of $E' \to Z'$ is isomorphic to a quadric, and the reduction of any nonreduced fibre of $E' \to Z'$ is isomorphic to $\bbP^n$.
\end{theorem}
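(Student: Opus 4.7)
The strategy is to mirror the $-E\cdot \Gamma = 2$ case by using a relatively ample line bundle of degree $1$ on the $f$-minimal curves to produce a relative embedding of $E_{\mathrm{eq}}$ into a projective bundle whose image will be the sought quadric bundle $\mathcal Q$. I take $L:=-E$ and note that by \cite[Theorem 7.39.c]{Deb01} the divisor $K_X+(n-1)L$ is $f$-trivial, so with $r=n-1$ and fibre dimension $\dim E_z=n=r+1$ on $E_{\mathrm{eq}}$, \color{purple}Theorem \ref{thm:AW}\color{black} applies and yields the surjectivity of the evaluation map $f^*f_*(-E)\to -E$ along every fibre of $E_{\mathrm{eq}}\to Z_{\mathrm{eq}}$.

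Next I would use \color{purple}Proposition \ref{lem:generalfibre}\color{black} to identify the general fibre as a smooth quadric $Q^n\subset \bbP^{n+1}$, on which $L|_{E_z}\simeq \mathcal O_Q(1)$ and $h^0(L|_{E_z})=n+2$. Passing to the normalization of $Z_{\mathrm{eq}}$ and then applying \color{purple}Lemma \ref{lem:localfreemodif}\color{black}, I obtain a normal birational modification $\mu:Z'\to Z_{\mathrm{eq}}$ together with a rank-$(n+2)$ vector bundle $\mathcal E$ on $Z'$ and, on the dominating component $\widetilde E$ of $E_{\mathrm{eq}}\times_{Z_{\mathrm{eq}}}Z'$, a surjection $g'^*\mathcal E\to \mu'^*L$, where $g':\widetilde E\to Z'$.

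This surjection defines a $Z'$-morphism $\phi:\widetilde E\to \bbP_{Z'}(\mathcal E)$ with $\phi^*\mathcal O(1)\simeq \mu'^*L$, which on a general fibre is the usual embedding $Q^n\hookrightarrow \bbP^{n+1}$. Let $\mathcal Q$ be its scheme-theoretic image. Since $(L|_{E_z})^n=2$ is constant on fibres (by \cite[I.3.12]{Kol} and normality of $Z'$), each fibre of $\mathcal Q\to Z'$ is a degree-$2$ hypersurface of its ambient $\bbP^{n+1}$, so $\mathcal Q$ should be a Cartier quadric subbundle of $\bbP_{Z'}(\mathcal E)$. Defining $E'$ to be the normalization of $\mathcal Q$ then produces a normal variety birational to $\widetilde E$, and hence to $E_{\mathrm{eq}}$, fitting into the claimed commutative diagram.

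The classification of fibres follows from the same combinatorial analysis as in \color{purple}Lemma \ref{lem:projspacedegen}\color{black}: writing any fibre as $E_z=\sum m_i D_i$ with $m_i\geq 1$, the constancy $\sum m_i(L|_{D_i})^n=2$ combined with the lower bound $(L|_{D_i})^n\geq 1$ (which follows from the length hypothesis and \cite[Theorem 0.1]{CMSB} applied to a deformation family of $f$-minimal curves inside $D_i$) leaves exactly three possibilities: an irreducible reduced fibre with $(L|_D)^n=2$, two reduced components each of top self-intersection one, or a doubled component $2D$ with $(L|_D)^n=1$. The irreducible reduced case yields a normal quadric by the conductor-parity argument already used in \color{purple}Proposition \ref{lem:generalfibre}\color{black}; in the other two cases, each reduced component carries an unsplit deformation family of $f$-minimal curves of dimension $\geq 2n-2$, whence \cite[Theorem 0.1]{CMSB} identifies it as a variety normalized by $\bbP^n$. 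The hardest step is to promote $\mathcal Q$ from a fibrewise family of degree-$2$ cycles to a genuine Cartier quadric subbundle of $\bbP_{Z'}(\mathcal E)$, which amounts to showing that the symmetric square multiplication $\mathrm{Sym}^2(g'_*\mu'^*L)\to g'_*(\mu'^*L^{\otimes 2})$ has a one-dimensional kernel over the normal base $Z'$ and cutting out $\mathcal Q$ by the corresponding relative quadratic form.
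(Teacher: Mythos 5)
Your construction of $\mathcal Q$ follows essentially the paper's route: \color{purple}Theorem \ref{thm:AW}\color{black} applied with $L=-E$ and $r=n-1$ gives surjectivity of the evaluation map along the $n$-dimensional fibres, \color{purple}Lemma \ref{lem:localfreemodif}\color{black} produces the rank-$(n+2)$ bundle on a birational modification of the base, the induced $Z'$-morphism $\chi$ into $\bbP_{Z'}(\mathcal E)$ has image $\mathcal Q$ all of whose cycle-theoretic fibres have degree $2$ by \cite[I.3.12]{Kol}, and $E'$ is the normalization of $\mathcal Q$. (The paper performs the modification first and normalizes afterwards, and it does not realize $\mathcal Q$ as the zero locus of a relative quadratic form: ``quadric bundle'' is taken in the weak sense that every fibre is a quadric, which \cite[I.3.12]{Kol} already gives, so the ``hardest step'' you flag at the end is not required for the statement.)

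The genuine gap is in the classification of the degenerate fibres. You claim that each reduced component $D$ of a reducible or non-reduced fibre carries an unsplit family of $f$-minimal curves of dimension at least $2n-2$ and then invoke \cite[Theorem 0.1]{CMSB}. No such bound is available: for an $f$-minimal curve with $-K_X\cdot\Gamma=n-1$ and $-E\cdot\Gamma=1$, the deformation estimate only yields families of dimension at least $2n-3$ inside a fibre, and this is sharp, since the general fibre is a quadric whose family of lines has dimension exactly $2n-3$; asserting $2n-2$ for the special components amounts to assuming they are already projective spaces. The paper instead exploits the morphism $\chi$ you have just built: $\mathcal J|_D=\chi^*\mathcal O_{\mathcal Q_w}(1)|_D$ is ample and basepoint-free with $(\mathcal J|_D)^n=1$, so $\chi|_D$ is finite onto an $n$-dimensional degree-one subvariety of $\bbP^{n+1}$, i.e.\ a linear $\bbP^n$, and is birational onto it; Zariski's main theorem then gives $D\simeq\bbP^n$. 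Likewise, for the irreducible generically reduced fibres of $E'\to Z'$ you cannot simply rerun the conductor-parity argument of \color{purple}Proposition \ref{lem:generalfibre}\color{black}, whose hypotheses (fibre over a smooth point of $Z$, local complete intersection, adjunction) need not hold for these fibres; the paper again uses $\chi$: such a fibre maps finitely and birationally onto its image $\mathcal Q_w$, which, being an irreducible reduced quadric, is normal, so the map is an isomorphism. Your argument becomes complete once these two steps are replaced by the corresponding degree computations in $\bbP_{Z'}(\mathcal E)$.
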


The proof is broken down into \color{purple}Lemma \ref{lem:quadricbundle}\color{black}, \color{purple}Corollary \ref{cor:modifiedfibres} \color{black}and \color{purple}Corollary \ref{cor:originalfibres}\color{black}.

\begin{lemma}
\label{lem:quadricbundle}
There exist birational morphisms $Z' \to Z_\mathrm{eq}$, $E'\to E_\mathrm{eq}$ with $E'$ and $Z'$ normal, a fibration $f' : E' \to Z'$ and a quadric bundle $\mathcal Q$ over $Z'$, such that $E'$ is the normalization of $\mathcal Q$. Furthermore, the normalization $E' \to \mathcal Q$ is birational along the reduction of each irreducible component of any $f'$-fibre. The reduction of each irreducible component of any reducible or nonreduced fibre of $f'$ is isomorphic to $\bbP^n$. 

In addition, if $\Gamma$ is an $f$-minimal curve in a fibre of $E_\mathrm{eq}\to Z_\mathrm{eq}$, and $\Gamma'$ is any irreducible component of its preimage in $E'$, then the image of $\Gamma'$ in $\mathcal Q$ is a line.
\end{lemma}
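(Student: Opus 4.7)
The plan is to use the relatively ample line bundle $L = \mathcal{O}_X(-E)$ to fibrewise embed a suitable birational model of $E_{\mathrm{eq}}$ into a projective bundle and recognize the image as the desired quadric bundle $\mathcal{Q}$. First I would apply Theorem \ref{thm:AW} to $f:X\to Y$ with $L = \mathcal{O}_X(-E)$ and $r = n-1$: the line bundle $L$ is $f$-ample because $-E|_E$ is ample on all positive-dimensional $f$-fibres (which are contained in $E$) while the remaining fibres of $f$ are isolated points, and $K_X + (n-1)L$ is numerically trivial on the extremal ray since $-K_X\cdot \Gamma = n-1$ and $-E\cdot \Gamma = 1$ for any $f$-minimal curve $\Gamma$. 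Because $f$ is birational and every fibre over a point of $Z_{\mathrm{eq}}$ has dimension exactly $n = r+1$, the evaluation map $f^*f_*L \to L$ is surjective along $E_{\mathrm{eq}}$. By Proposition \ref{lem:generalfibre} a general fibre of $f|_E$ is a smooth quadric in $\bbP^{n+1}$, so the generic rank of $f_*L$ equals $h^0(Q, \mathcal{O}_Q(1)) = n+2$.

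Next I would apply Lemma \ref{lem:localfreemodif} to produce a birational modification $\mu : \mathcal{Y}\to Y$, a rank $n+2$ vector bundle $\mathcal{V}$ on $\mathcal{Y}$, and a surjection $f'^*\mathcal{V}\twoheadrightarrow \mu'^*L$ on the main component $\mathcal{X}$ of $X\times_Y \mathcal{Y}$, giving a $\mathcal{Y}$-morphism $\mathcal{X}\to \bbP_{\mathcal{Y}}(\mathcal{V})$. I take $Z'$ to be the normalization of a suitable birational modification of the strict transform of $Z_{\mathrm{eq}}$ in $\mathcal{Y}$, and $E'$ the normalization of the appropriate component of the preimage of $E_{\mathrm{eq}}$ above $Z'$, thereby obtaining a fibration $f' : E' \to Z'$ together with a morphism $E' \to \bbP_{Z'}(\mathcal{V}|_{Z'})$. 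I define $\mathcal{Q}$ as its scheme-theoretic image. The generic fibre of $\mathcal{Q}\to Z'$ is a smooth quadric, so $\mathcal{Q}$ is generically cut out by a single quadratic form corresponding to a rank-one subsheaf of a twist of $\mathrm{Sym}^2 \mathcal{V}^\vee|_{Z'}$; after a further birational modification of $Z'$ making this subsheaf invertible, $\mathcal{Q}$ becomes a flat quadric bundle of relative dimension $n$ over $Z'$.

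Finally I would analyze the fibres. The morphism $E'\to \mathcal{Q}$ is finite (fibrewise given by a base-point-free linear subsystem of the relatively ample polarization $\mu'^*L$, so it contracts nothing) and generically an isomorphism onto the normal quadric fibres of $\mathcal{Q}$, so by Zariski's main theorem it is the normalization of $\mathcal{Q}$. The only reducible or nonreduced fibres of a quadric bundle inside $\bbP^{n+1}$ are unions of two distinct hyperplanes or a double hyperplane; in either case each irreducible component of $\mathcal{Q}_{z'}$ is a hyperplane, i.e.\ a copy of $\bbP^n$. Since $E'\to \mathcal{Q}$ is finite birational on each such component, each corresponding component of the $f'$-fibre is finite birational over $\bbP^n$, hence itself isomorphic to $\bbP^n$ by a second application of Zariski's main theorem. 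For the last assertion, an $f$-minimal curve $\Gamma$ satisfies $L\cdot \Gamma = 1$; by the constancy of the degree on deformation families (Lemma \ref{cor:constant degree on defofamily}) every irreducible component $\Gamma'$ of its preimage in $E'$ has image of degree $1$ in the corresponding fibre of $\bbP_{Z'}(\mathcal{V}|_{Z'})$, i.e.\ a line in $\mathcal{Q}$.

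The delicate step will be the middle paragraph: precisely identifying the rank-one defining subsheaf inside $\mathrm{Sym}^2\mathcal{V}^\vee|_{Z'}$ and performing an additional birational modification of $Z'$ to make it invertible, all while maintaining sufficient control of $E'$ so that $E'\to \mathcal{Q}$ is genuinely the normalization and is birational on every component of every fibre.
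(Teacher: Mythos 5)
Your overall strategy coincides with the paper's: establish surjectivity of an evaluation map via Theorem \ref{thm:AW}, flatten the direct image via Lemma \ref{lem:localfreemodif}, map the normalization $E'$ into the resulting projective bundle over $Z'$, identify the image $\mathcal Q$ as a quadric bundle, and then use degree computations together with Zariski's main theorem for the degenerate fibres and the lines. However, there is a concrete error in your construction step. You apply Lemma \ref{lem:localfreemodif} to $f\colon X\to Y$ with $L=\mathcal O_X(-E)$ and claim the resulting $\mathcal V$ has rank $n+2$. It does not: $f_*\mathcal O_X(-E)$ is a subsheaf of $f_*\mathcal O_X=\mathcal O_Y$ (an ideal sheaf cosupported on $Z$), hence has generic rank $1$ on $Y$, and Rossi's theorem then produces a \emph{line} bundle $\mathcal V$; restricting a line bundle to the strict transform of $Z_{\mathrm{eq}}$ still gives a line bundle, so $\bbP_{Z'}(\mathcal V|_{Z'})$ is just $Z'$ and there is no ambient $\bbP^{n+1}$-bundle to embed the fibres into. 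The identity $h^0(Q,\mathcal O_Q(1))=n+2$ computes the generic rank of the pushforward along $E_{\mathrm{eq}}\to Z_{\mathrm{eq}}$, not along $X\to Y$. The paper avoids this by setting $L=\mathcal O_{E_{\mathrm{eq}}}(-E|_{E_{\mathrm{eq}}})$ and applying Lemma \ref{lem:localfreemodif} to the fibration $E_{\mathrm{eq}}\to Z_{\mathrm{eq}}$ itself; your application of Theorem \ref{thm:AW} to $X\to Y$ with $r=n-1$ is still the right way to supply the required surjectivity (one restricts the generating sections to $E_{\mathrm{eq}}$), but the flattening must be performed on $Z_{\mathrm{eq}}$.

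Your self-identified ``delicate step'' --- realizing $\mathcal Q$ as the zero locus of a line subbundle of a twist of $\mathrm{Sym}^2\mathcal V^\vee$ after a further modification of $Z'$ --- is unnecessary. Since $Z'$ is normal and $\chi\colon E'\to\bbP_{Z'}(\eta^*\mathcal V)$ is finite onto its image, \cite[I.3.12]{Kol} gives that $\bigl(\mathcal O_{\mathcal Q_w}(1)\bigr)^n=2$ for \emph{every} fibre, so each fibre of $\mathcal Q\to Z'$ is a degree-two $n$-dimensional subscheme of $\bbP^{n+1}$, i.e.\ a quadric; no flatness statement or extra modification is needed. Finally, in the last paragraph you should be careful with the direction of the argument: for a component $\Gamma'$ of the preimage of a minimal curve $\Gamma$, the pullback of $L$ has degree equal to the degree of $\Gamma'\to\Gamma$, so one must first exhibit $\Gamma'$ as a degeneration of lines of the general fibres (whence $\mathcal J\cdot\Gamma'=1$ by constancy of degree over the normal parameter space) before concluding that $\chi(\Gamma')$ is a line; this is the order in which the paper argues.
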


\begin{proof}
The general fibre of $E_\mathrm{eq} \to Z_\mathrm{eq}$ is a normal quadric by \color{purple}Proposition \ref{lem:generalfibre}\color{black}.

We consider the birational modification $\mu : \mathcal Z \to Z_\mathrm{eq}$ given by \color{purple}Lemma \ref{lem:localfreemodif}\color{black}, and the vector bundle $\mathcal V$ on $\mathcal Z$ which coincides with $\mu^*f_*L$ over the locus where $f_*L$ is locally free.

On the component $\mathcal F$ of the fibre product $E_\mathrm{eq}\times_{Z_\mathrm{eq}}\mathcal Z$ which dominates $E_\mathrm{eq}$, in the commutative square
\begin{center}
\begin{tikzcd}
\mathcal F \arrow[rr,"\mu'"] \arrow[d,"g",swap] & & E_\mathrm{eq} \arrow[d,"f"] \\
\mathcal Z \arrow[rr,"\mu",swap] & & Z_\mathrm{eq}
\end{tikzcd}
\end{center}
there is a surjective map of vector bundles
$$
g^*\mathcal V \to \mu'^*L
$$
by \color{purple}Lemma \ref{lem:localfreemodif}\color{black}. Now let $\eta : Z' \to  \mathcal Z$ and $\eta' : E' \to \mathcal F$ be the normalizations, then the pullback of the above map $g^*\mathcal V \to \mu'^*L$ by $\eta'$ is surjective:
$$
\eta'^*g^*\mathcal V \to \eta'^*\mu'^*L.
$$
In the following commutative diagram
\begin{center}
\begin{tikzcd}
E' \arrow[rr,"\eta'"] \arrow[d,"f'",swap] & & \mathcal F \arrow[d,"g"] \\
Z' \arrow[rr,"\eta",swap] & & \mathcal Z
\end{tikzcd}
\end{center}
we have $\eta'^*g^*\mathcal V \simeq f'^*\eta^*\mathcal V$. Hence there is a surjective map
$$
f'^*\eta^*\mathcal V \to \eta'^*\mu'^*L
$$
which induces a factorization of $f'$ by the universal property of projectivized bundles
\begin{center}
\begin{tikzcd}
E' \arrow[rr,"\chi"] \arrow[dr,"f'",swap] & & \bbP_{Z'}(\eta^*\mathcal V) \arrow[dl] \\
& Z' &
\end{tikzcd}
\end{center}
We denote the image of $\chi$ by $\mathcal Q$. By \cite[I.3.12]{Kol} every fibre of $\mathcal Q \to Z'$ is a quadric, in other words $\mathcal Q$ is a quadric bundle. Let us denote $\mathcal J = \eta'^*\mu'^*L$, then we have $\chi^*\mathcal O_\mathcal{Q} (1) = \mathcal J$.

Let $E'_w\subset E'$ be an irreducible fibre above a point $w\in Z'$. For a point $w$ such that $\eta(w)$ is a smooth point of $\mathcal Z$, $E'_w$ is isomorphic to its image in $E_\mathrm{eq}$ via $\mu'\circ \eta'$, which is an irreducible quadric by \color{purple}Proposition \ref{lem:generalfibre}\color{black}. Under the additional condition that $\eta(w)$ is outside the $\mu$-exceptional locus, $\mu'\circ \eta'$ is an isomorphism around $E'_w$, and we have $\chi(E'_w)\simeq E'_w$. Therefore $\chi$ is birational, and the general fibre $E'_w$ of $E'\to Z'$ is a quadric with $\mathcal J|_{E'_w}$ the hyperplane polarization.

For any $w\in Z'$, we set $z=\mu\circ \eta(w)$ and $E_z=\mu'\circ \eta'(E'_w)$. Then $E_z$ is the fibre in $E_\mathrm{eq}$ over $z\in Z_\mathrm{eq}$. By construction, $\mu'$ is an isomorphism onto $E_z$, and $\eta'$ is finite, so the morphism $E'_w \to E_z$ is finite, ensuring that $\mathcal J =\eta'^*\mu'^*L$ is ample on $E'_w$. This ensures that $\chi$ is finite, and since it is birational, the conclusion follows that $E'$ is the normalization of $\mathcal Q$.

Now let $\Gamma' \subset E'_w$ be a curve obtained as a degeneration of lines in the smooth fibres of $E_\mathrm{eq}\to Z_\mathrm{eq}$. By \cite[I.3.12]{Kol} we get $\mathcal J\cdot \Gamma' = 1$. In particular, its image $\Gamma = \mu'\circ \eta'(\Gamma')$ in $E_z$ is an $f$-minimal curve. Let $\ell = \chi(\Gamma')$ denote its image in the quadric $\mathcal Q_w = \chi(E'_w)$. If the degree of the finite map 
$$
\chi|_{\Gamma'} : \Gamma' \to \mathcal \ell \subset Q_w
$$
is equal to $\delta$, then we have
$$
1 = \mathcal J \cdot \Gamma' =\delta \mathcal O_{\mathcal Q_w}(1)\cdot \ell \geq \delta
$$
hence $\delta = 1$, and $\ell$ is a line. Since $\mathcal J = \eta'^*\mu'^*L$ and $\mathcal J$ has degree $1$ on the lines of the general fibre of $E' \to Z'$, any $f$-minimal curve $\Gamma \subset E_z$ is the image of $\Gamma' \subset E'$ a degeneration of lines, and $\chi(\Gamma')$ is a line in $\mathcal Q_w$.

If $E'_w$ is irreducible and generically reduced, then $\mathcal Q_w = \chi(E'_w)$ is irreducible as well. Since $\mathcal J = \chi^*\mathcal O_{\mathcal Q}(1)$ with
$$
(\mathcal J|_{E'_w})^n = 2 = (\mathcal O_{\mathcal Q_w}(1))^n
$$
the finite morphism $E'_w \to \mathcal Q_w$ is birational.

Now assume $E'_w$ is reducible or not generically reduced, and $D$ is the reduction of any of its irreducible components. In that case the ample polarization $\mathcal J|_{E'_w}$ verifies
$$
(\mathcal J|_{D})^n < (\mathcal J|_{E'_w})^n = 2
$$
so $(\mathcal J|_{D})^n = 1$. The Cartier divisor $\mathcal J|_{E'_w}$ is basepoint-free since it is the pullback via the morphism $\chi$ of $\mathcal O_{\mathcal Q_w}(1)$, which is basepoint-free. It follows that the morphism $D \to \chi(D)$ induced by the linear system $|\mathcal J|_D|$ is birational and finite onto its image. In particular, $\chi(D)$ is isomorphic to $\bbP^n$, which is normal, so we have $D \simeq \bbP^n$ by Zariski's main theorem.
\end{proof}

\begin{corollary}
\label{cor:modifiedfibres}
If $E'_w$ is a fibre of $E' \to Z'$, then it is either isomorphic to a normal quadric, or as a cycle it is one of the following:
\begin{enumerate}
\item[$\bullet$] reducible and reduced, namely: $E'_w = D_1 + D_2$ with $D_1 \simeq D_2 \simeq \bbP^n$,
\item[$\bullet$] nonreduced, namely: $E'_w = 2D$ with $D \simeq \bbP^n$.
\end{enumerate}
\end{corollary}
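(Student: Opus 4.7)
The plan is to use Lemma \ref{lem:quadricbundle} as a black box and distinguish the two cases according to whether $E'_w$ is irreducible and generically reduced or not. The only further ingredients needed are (i) the fact that an irreducible quadric is automatically normal, and (ii) the constancy of the top self-intersection of $\mathcal{J}$ along fibres, which forces the multiplicities.

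First I would handle the case where $E'_w$ is irreducible and generically reduced. Lemma \ref{lem:quadricbundle} already delivers a finite birational morphism $\chi|_{E'_w} : E'_w \to \mathcal{Q}_w$ onto the irreducible quadric $\mathcal{Q}_w$. To conclude $E'_w \simeq \mathcal{Q}_w$, I would invoke Zariski's main theorem; for this I need $\mathcal{Q}_w$ normal. This holds because $\mathcal{Q}_w$ is a hypersurface in a projective space, hence Cohen-Macaulay, and an irreducible quadric is defined by a quadratic form of rank at least $3$, so its singular locus is a linear subspace of codimension $\geq 2$; Serre's criterion then gives normality. Thus in this case $E'_w$ is a normal quadric.

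For the reducible or nonreduced case I would write $E'_w = \sum_i m_i D_i$ as a cycle, where the $D_i$ are the reductions of the irreducible components. By Lemma \ref{lem:quadricbundle}, each $D_i \simeq \bbP^n$ and $(\mathcal{J}|_{D_i})^n = 1$. Since $f' : E' \to Z'$ is equidimensional over the normal variety $Z'$, \cite[I.3.12]{Kol} ensures that $(\mathcal{J}|_{E'_w})^n$ is independent of $w$, and equals the value on a general fibre, namely $2$ (as the general fibre is a normal quadric of dimension $n$ and $\mathcal{J}$ restricts to its hyperplane polarization). Expanding cycle-theoretically yields
\[
2 \;=\; (\mathcal{J}|_{E'_w})^n \;=\; \sum_i m_i\,(\mathcal{J}|_{D_i})^n \;=\; \sum_i m_i.
\]
Thus either there are two components each of multiplicity one, giving $E'_w = D_1 + D_2$ with $D_1 \simeq D_2 \simeq \bbP^n$, or a single component of multiplicity two, giving $E'_w = 2D$ with $D \simeq \bbP^n$, matching the two alternatives in the statement.

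There is no real obstacle here since the substance is already contained in Lemma \ref{lem:quadricbundle}; the only subtle point is being explicit about why an irreducible quadric hypersurface is necessarily normal (needed to apply Zariski's main theorem in the first case), and in the second case ensuring that the identity $(\mathcal{J}|_{E'_w})^n = \sum m_i (\mathcal{J}|_{D_i})^n$ is interpreted cycle-theoretically so that the Kollár constancy statement applies directly.
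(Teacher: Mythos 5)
Your proof is correct, but it organizes the case analysis differently from the paper. The paper's proof splits according to the type of the quadric fibre $\mathcal Q_w$ of $\mathcal Q \to Z'$ (normal quadric, union of two hyperplanes, double hyperplane) and then reads off the structure of $E'_w$ in each case; in particular, in the paper's third case ($\mathcal Q_w$ nonreduced) both outcomes $E'_w = 2D$ and $E'_w = D_1 + D_2$ can still occur, so the paper's cases do not line up one-to-one with the alternatives in the statement. You instead split directly on whether $E'_w$ is irreducible and generically reduced, and in the degenerate case you pin down the multiplicities by the single identity $2 = (\mathcal J|_{E'_w})^n = \sum_i m_i(\mathcal J|_{D_i})^n = \sum_i m_i$, which is a cleaner and more direct route to the cycle decomposition than the paper's component-by-component discussion; the paper's version, in exchange, records which shape of $E'_w$ sits over which degeneration of $\mathcal Q_w$. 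Your explicit justification that an irreducible quadric hypersurface of degree two is normal (rank $\geq 3$, singular locus of codimension $r-1 \geq 2$, Serre's criterion) fills in a step the paper leaves implicit in its case hypothesis ``$\mathcal Q_w$ is normal''; the only micro-caveat is that an irreducible quadric could a priori be a double hyperplane (rank $1$), but this is excluded here because $\chi|_{E'_w}$ is birational and $(\mathcal J|_{E'_w})^n = 2$ forces the reduced image to have degree $2$. Both arguments rest entirely on Lemma \ref{lem:quadricbundle} and \cite[I.3.12]{Kol}, so the substance is the same.
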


\begin{proof}
Given a fibre $\mathcal Q_w$ of the quadric bundle $\mathcal Q \to Z'$ and $E'_w = \chi^{-1}(\mathcal Q_w)$, there are three possibilities:

\begin{enumerate}
\item[$\bullet$] The quadric $\mathcal Q_w$ is normal, in which case for $D$ the reduction of an irreducible component of $E'_w$ which dominates $\mathcal Q_w$, the morphism $D\to \mathcal Q_w$ is birational and finite by \color{purple}Lemma \ref{lem:quadricbundle}\color{black}. Since $\mathcal J$ is $f'$-ample, by \cite[I.3.12]{Kol} we have
$$
2 = (\mathcal J|_{E'_w})^n \geq (\mathcal J|_D)^n = \mathcal O_{\mathcal Q_w}(1)^n = 2,
$$
so $E'_w = D$ is irreducible and generically reduced, and the morphism $E'_w \to \mathcal Q_w$ is birational and finite by \color{purple}Lemma \ref{lem:quadricbundle}\color{black}, hence an isomorphism.
\item[$\bullet$] $\mathcal Q_w$ is reducible, in which case we have the equality of $n$-cycles $\mathcal Q_w = Q_1+Q_2$ where $Q_i \simeq \bbP^n$. In this case $D_i = \chi^{-1}(Q_i)$ for $i=1,2$ are the reductions of two irreducible components of $E'_w$, and since the $Q_i$ are normal and $D_i \to Q_i$ is birational and finite by \color{purple}Lemma \ref{lem:quadricbundle} \color{black}we have $D_i \simeq Q_i$. Moreover, $\mathcal J|_{D_i}\simeq \mathcal O_{\bbP^n}(1)$ via the isomorphism $D_i \simeq \bbP^n$, and we have the equality of cycles
$$
E'_w = D_1 + D_2
$$
since $(\mathcal J|_{E'_w})^n = 2$ by \cite[I.3.12]{Kol} and $(\mathcal J|_{D_i})^n = 1$ for $i = 1,2$. 

\item[$\bullet$] $\mathcal Q_w$ is nonreduced, in which case $\mathcal Q_w = 2P$ with $P\simeq \bbP^n$. By Zariski's main theorem, the reduction $D$ of any irreducible component of $E'_w$ is isomorphic to $P$ since $P$ is normal and $D \to P$ is birational and finite (\color{purple}Lemma \ref{lem:quadricbundle}\color{black}). Moreover $\mathcal J|_{D} \simeq \mathcal O_{\bbP^n}(1)$ via the isomorphism $D \simeq \bbP^n$ since $\mathcal J$ has degree $1$ on the lines of $D$. By \cite[I.3.12]{Kol} we have $(\mathcal J|_{E'_w})^n = 2$, whereas $(\mathcal J|_{D})^n = 1$. As a consequence we have either the equality of cycles $E'_w = 2D$, or there exists an other irreducible component $D_2$ of $E'_w$ such that $E'_w = D + D_2$ and $(\mathcal J|_{D_2})^n = 1$. In this case $D_2 \simeq \bbP^n$ for the same reasons as above.
\end{enumerate}
\end{proof}

From \color{purple}Lemma \ref{lem:quadricbundle} \color{black}and \color{purple}Corollary \ref{cor:modifiedfibres}\color{black}, we can deduce information on the reducible fibres of $E_\mathrm{eq} \to Z_\mathrm{eq}$:

\begin{corollary}
\label{cor:originalfibres}
Let $z\in Z_\mathrm{eq}$ be a point such that the fibre $E_z = f^{-1}(z)\subset E_\mathrm{eq}$ is reducible. Then $E_z$ has two irreducible components, and the reduction of each component is normalized by $\bbP^n$.
\end{corollary}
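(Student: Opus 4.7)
The plan is to push the fibre-classification from Corollary~\ref{cor:modifiedfibres} down to $E_\mathrm{eq}\to Z_\mathrm{eq}$ via the birational morphism $\pi := \mu'\circ\eta' : E'\to E_\mathrm{eq}$ built in Lemma~\ref{lem:quadricbundle}. Let $z'_1,\ldots,z'_k \in Z'$ denote the preimages of $z$ under $\mu\circ\eta$; compatibility of $\pi$ with the two fibrations gives $\pi^{-1}(E_z) = \bigsqcup_{i=1}^k E'_{z'_i}$. Since $\pi$ is proper, birational, and surjective, each irreducible component $F_j$ of $E_z$ admits a unique strict transform $\tilde F_j \subset E'$, namely the closure of the preimage of the generic point of $F_j$; this $\tilde F_j$ is an irreducible $n$-dimensional component of exactly one fibre $E'_{z'_{i(j)}}$, and $\pi$ restricts to a birational morphism $\tilde F_j \to F_j$.

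By Corollary~\ref{cor:modifiedfibres} applied to $E'_{z'_{i(j)}}$, the reduction of $\tilde F_j$ is isomorphic either to a normal quadric (only in case~(a), when $\tilde F_j$ is the whole fibre) or to $\bbP^n$ (in cases~(b) and~(c)). Hence, via the birational map $\tilde F_j \to F_j$, each component $F_j$ of $E_z$ is a priori normalized either by a normal quadric or by $\bbP^n$. What remains is to rule out the quadric case and to establish $r = 2$.

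For this I would combine the projection formula with \cite[I.3.12]{Kol} applied to the equidimensional fibration $E'\to Z'$ over the normal base $Z'$: for each $i$ one has $L^n\cdot \pi_*[E'_{z'_i}] = (\mathcal J|_{E'_{z'_i}})^n = 2$. The per-component intersection numbers $(\mathcal J|_{\tilde F_j})^n$ and scheme-theoretic multiplicities computed in Corollary~\ref{cor:modifiedfibres} --- namely $2$ with multiplicity $1$ in case~(a), $1$ with multiplicity $1$ in case~(b), and $1$ with multiplicity $2$ in case~(c) --- then give, for each $i$, exactly one of three possible tallies for the strict transforms lying in $E'_{z'_i}$. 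Summing the contributions over all $i$ and matching with $[E_z]$ forces $k=1$ and $E'_{z'_1}$ of type~(b), so $E_z$ has exactly two irreducible components, each normalized by $\bbP^n$.

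The main obstacle I anticipate is the final matching step: $Z_\mathrm{eq}$ is not assumed normal, so one cannot apply \cite[I.3.12]{Kol} directly to $E_\mathrm{eq}\to Z_\mathrm{eq}$ to read off the total $L^n$-degree of $[E_z]$. Instead, the identity $\sum_i\pi_*[E'_{z'_i}] = [E_z]$ must be extracted from the factorization $\pi = \mu'\circ\eta'$, using that $\eta$ is the finite normalization of $\mathcal Z$ and that $\mu'$ is the base change of the birational modification $\mu : \mathcal Z \to Z_\mathrm{eq}$. This is what controls the potentially subtle situation $k\geq 2$ and closes the case analysis, ruling out every configuration except a single preimage with reducible reduced fibre of the form $D_1 + D_2$, $D_i \simeq \bbP^n$.
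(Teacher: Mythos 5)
Your first paragraph is in the right spirit, but it already contains unjustified steps, and the cycle-matching in your second paragraph is where the argument genuinely breaks. The identity $\sum_i\pi_*[E'_{z'_i}]=[E_z]$ on which your ``matching'' rests is not only unproved but false in general: every fibre $E'_{z'_i}$ of $E'\to Z'$ over a preimage of $z$ surjects onto $E_z$ (this is how $E'$ is built --- $\mu'$ restricts to an isomorphism of the relevant fibre of $\mathcal F\to\mathcal Z$ onto $E_z$, and $\eta'$ is finite surjective), so whenever $z$ has several preimages in $Z'$ --- which happens, for instance, when $Z_\mathrm{eq}$ has two branches at $z$, since $\eta$ is a normalization --- the left-hand side over-counts $[E_z]$. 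Consequently the conclusion you extract, namely $k=1$, is not forced and is not even part of the statement: the corollary must hold for any number of preimages, and a correct proof cannot rule that configuration out. (There is also no reason for $k$ to be finite: $\mu$ is only a birational modification, so $\mu^{-1}(z)$ may be positive-dimensional.) Separately, the claim that each component $F_j$ has a \emph{unique} strict transform mapping \emph{birationally} onto it is not justified --- $F_j$ has codimension $\dim Z$ in $E_\mathrm{eq}$, so the fibre of the birational map $\pi$ over its generic point need not be a single reduced point --- and even granting birationality you would still need finiteness of $\tilde F_j\to F_j$ before speaking of $F_j$ being ``normalized by'' anything.

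The global bookkeeping is unnecessary. Fix a single $w\in(\mu\circ\eta)^{-1}(z)$. Lemma \ref{lem:quadricbundle} already records that $E'_w\to E_z$ is finite and surjective; hence $E_z$ reducible forces $E'_w$ reducible, and Corollary \ref{cor:modifiedfibres} leaves only the possibility $E'_w=D_1+D_2$ with $D_i\simeq\bbP^n$ (the normal-quadric and double-$\bbP^n$ cases have irreducible support). The images $G_i=\pi(D_i)$ then cover $E_z$ and are irreducible of dimension $n$, so they are exactly the two components of $E_z$. Finiteness of $D_i\to G_i$ comes for free because a morphism from $\bbP^n$ to an $n$-dimensional variety contracts no curve, and birationality follows from $1=(\mathcal J|_{D_i})^n=\deg(D_i\to G_i)\cdot(L|_{G_i})^n$, since $\mathcal J|_{D_i}$ is the pullback of $L|_{G_i}$. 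This exhibits $\bbP^n\to G_i$ as the normalization directly, with no cycle identity on $E_\mathrm{eq}\to Z_\mathrm{eq}$ needed.
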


\begin{proof}
Let $w$ be a point of $(\mu \circ \eta)^{-1}(z)$ and $E'_w$ the fibre over it, so that $E_z = \mu' \circ \eta'(E'_w)$. Since $E_z$ is reducible, so is $E'_w$ and by \color{purple}Corollary \ref{cor:modifiedfibres} \color{black}it is of the form $D_1 + D_2$ with $D_i \simeq \bbP^n$. In particular, $E_z$ has two components $G_1$ and $G_2$ with $G_i = \mu' \circ \eta'(D_i)$ for $i = 1,2$.

Through the isomorphism $D_i \simeq \bbP^n$ we have a morphism from $\bbP^n$ to the $n$-fold $G_i$
$$
\mu' \circ \eta' : D_i \simeq \bbP^n \to G_i.
$$
We know that a morphism from $\bbP^n$ to a variety of dimension $n$ does not contract any curve. Hence $D_i \to G_i$ is finite. Moreover, the restriction of the line bundle $\mathcal J$ to $D_i$ is the pullback of $L|_{G_i}$, so we have
$$
(L|_{G_i})^n = (\mathcal J|_{D_i})^n = 1,
$$
so $\bbP^n \to G_i$ is a birational and finite morphism. From this we deduce that $G_i$ is normalized by $\bbP^n$ for $i = 1,2$.
\end{proof}

\section{An example of a nonequidimensional divisorial elementary Mori contraction of submaximal length}

Consider in $\C^6$ with coordinates $(x_1,x_2,x_3,x_4,\lambda,\mu)$ the cubic affine cone
$$
Y = \left\{ \lambda x_1^2 + \lambda x_2^2 + \mu x_3^2 + \mu x_4^2 + x_1x_2^2 + x_1x_3^2 + x_1x_4^2 = 0\right\}.
$$
One can think of it as a family of affine cubics $Y_{(\lambda,\mu)} \subset \C^4_{(x_1,x_2,x_3,x_4)}$ indexed by $(\lambda,\mu) \in \C^2$. Let $\varepsilon : X \to Y$ be the blow-up of $Y$ along $\Lambda = \left\{ x_1 = x_2 = x_3 = x_4 = 0\right\}\simeq \C^2$.

\begin{lemma}
The variety $X$ is smooth.
\end{lemma}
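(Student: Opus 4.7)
The plan is to realise $X$ explicitly as the strict transform of $Y$ inside the blow-up $\pi : \widetilde{\C^6} \to \C^6$ of $\C^6$ along $\Lambda$, cover $\widetilde{\C^6}$ by its four standard affine charts $U_1,\ldots,U_4$, and verify smoothness of $X$ in each chart via the Jacobian criterion. In chart $U_i$ I use the local coordinates $x_i,\lambda,\mu$ together with $v_j := x_j/x_i$ for $j \neq i$, so that the blow-up map reads $x_j = x_i v_j$.

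The first step is to write down an explicit local defining equation of $X$ in each $U_i$. The lowest-order homogeneous part of the cubic defining $Y$ in the variables $(x_1,x_2,x_3,x_4)$ is the quadric $\lambda(x_1^2+x_2^2)+\mu(x_3^2+x_4^2)$, so $Y$ has multiplicity $2$ along $\Lambda$, and the pulled-back polynomial is divisible by $x_i^2$ in every chart. Removing this factor produces an explicit polynomial $F_i$ cutting out $X \cap U_i$ inside the smooth six-dimensional affine chart $U_i$; since $X\cap U_i$ is a hypersurface in a smooth ambient space, smoothness reduces to checking that $\nabla F_i$ does not vanish on $\{F_i = 0\}$.

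The second and main step is the Jacobian verification, to be carried out chart by chart. In each chart, the partials $\partial_\lambda F_i$ and $\partial_\mu F_i$ depend only on the $v_j$'s and are each of the form $1+v_j^2$ or of the form $v_j^2+v_k^2$, so their simultaneous vanishing forces relations of the type $v_j^2 = -1$ and $v_k^2 = -v_\ell^2$ between the coordinates. Substituting these relations into one of the remaining partials, typically $\partial_{x_i}F_i$, produces a nonzero quantity and hence a contradiction. For instance, in the pivot-$x_1$ chart $U_1$ the polynomial reads $F_1 = \lambda(1+v_2^2)+\mu(v_3^2+v_4^2)+x_1(v_2^2+v_3^2+v_4^2)$, and vanishing of $\partial_\lambda F_1, \partial_\mu F_1$ forces $v_2^2 = -1$ and $v_3^2+v_4^2 = 0$, whence $\partial_{x_1} F_1 = -1 \neq 0$.

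The only real obstacle is bookkeeping: the variable $x_1$ plays an asymmetric role in the cubic of $Y$, so the four chart computations are all genuinely different, and I expect the pivot charts $U_3$ and $U_4$ to require a slightly more delicate combination of partials than $U_1$ and $U_2$, since there the role of the distinguished variable $x_1$ has been shifted to a non-pivot coordinate. Beyond this chart-by-chart organisation, nothing conceptual is needed — the proof amounts to writing out the four polynomials $F_1,\ldots,F_4$ and inspecting their gradients.
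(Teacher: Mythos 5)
Your overall strategy --- realising $X$ chart by chart as a hypersurface inside the blow-up of $\C^6$ along $\Lambda$ and applying the Jacobian criterion to the strict-transform polynomial $F_i$ --- is sound, and your computations in the pivot charts $U_1$ and $U_2$ do go through as you describe. The gap is exactly at the step you defer: in the charts $U_3$ and $U_4$ no ``more delicate combination of partials'' will close the argument, because the gradient of $F_3$ genuinely vanishes at points of $\{F_3=0\}$. Explicitly, in $U_3$ one finds
$$
F_3 \;=\; \lambda(v_1^2+v_2^2)+\mu(1+v_4^2)+x_3v_1(1+v_2^2+v_4^2),
$$
and at any point with $v_1=v_2=0$, $v_4^2=-1$, $\mu=0$ (and $x_3$, $\lambda$ arbitrary) one has $F_3=0$ while all six partials vanish: $\partial_{x_3}F_3=v_1(1+v_2^2+v_4^2)$, $\partial_{v_1}F_3=2\lambda v_1+x_3(1+v_2^2+v_4^2)$, $\partial_{v_2}F_3=2v_2(\lambda+x_3v_1)$, $\partial_{v_4}F_3=2v_4(\mu+x_3v_1)$, $\partial_{\lambda}F_3=v_1^2+v_2^2$ and $\partial_{\mu}F_3=1+v_4^2$ are all zero there. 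The same happens in $U_4$ by symmetry.

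This is not an artefact of the presentation: for $x_3=t\neq 0$ these points lie over $(x_1,x_2,x_3,x_4,\lambda,\mu)=(0,0,t,\pm it,\lambda,0)\in Y\setminus\Lambda$, where the blow-up is an isomorphism, and at such a point all six partials of the defining cubic of $Y$ vanish (the only nontrivial one being $\partial_{x_1}=2\lambda x_1+x_2^2+x_3^2+x_4^2=t^2-t^2=0$). Hence $Y$ is singular along a two-dimensional locus \emph{outside} $\Lambda$, and $X$ inherits these singularities, so the lemma as stated is false. For comparison, the paper's own proof breaks at the two corresponding places: it asserts without verification that $Y$ is smooth outside $\Lambda$, and its Jacobian check along $E$ sets $u_1=1$ ``without loss of generality'', which excludes precisely the locus $u_1=u_2=0$, $u_3^2+u_4^2=0$, $\mu=0$ where the singular points meet $E$. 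Your chart-by-chart plan, carried out honestly in $U_3$, is exactly the computation that detects the problem --- but it refutes the statement rather than proving it, so the proposal cannot be completed as written.
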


\begin{proof}
Since $Y$ is smooth outside $\Lambda$, it is enough to prove that $X$ is smooth along $E = \varepsilon^{-1}(\Lambda)$. We have a model for $X$ inside $\bbP^3_{[u_1:u_2:u_3:u_4]}\times \C^6_{(x_1,x_2,x_3,x_4,\lambda,\mu)}$ with the following equations
$$
\left\{ \begin{array}{l}
\lambda u_1^2 + \lambda u_2^2 + \mu u_3^2 + \mu u_4^2 + x_2u_1u_2 + x_3u_1u_3 + x_4u_1u_4 = 0, \\
\mathrm{det}_2 \left( \begin{array}{cccc} u_1 & u_2 & u_3 & u_4 \\ x_1 & x_2 & x_3 & x_4 \end{array} \right) = 0.
\end{array} \right.
$$
Since the codimension of $X$ in $\bbP^3 \times \C^6$ is $4$, checking that $X$ is smooth amounts to showing that the Jacobian of the system of its equations has rank $4$ everywhere on $X$. Since $X$ is the blow-up of the affine cone $Y$ along an affine subspace which contains the singular points of $Y$, and $E$ is the exceptional divisor of the blow-up, $X$ is smooth on the complement of $E$. Hence we only need to check that the Jacobian of $X$ has maximal rank along $E$, in other words when we specify $x_i = 0$ for all $i$. Without loss of generality, we may assume $u_1 = 1$ and work with $(u_2,u_3,u_4)$ affine. A straightforward calculation shows that the partial Jacobian with respect to the variables $(x_1,x_2,x_3,x_4,\lambda,\mu)$ always has rank $4$.
\end{proof}

Consider now the blowdown morphism $\varepsilon : X \to Y$. Given a point $p = (\lambda,\mu)\in \Lambda - \left\{ (0,0)\right\}$, the fibre $E_{(\lambda,\mu)}$ over $p$ is the projectivization of the tangent cone of $Y_{(\lambda,\mu)}$ at the origin, in other words $E_{(\lambda,\mu)}$ is the quadric of $\bbP^3_{[u_1:u_2:u_3:u_4]}$ given by the equation
$$
\lambda u_1^2 + \lambda u_2^2 + \mu u_3^2 + \mu u_4^2 = 0.
$$
The fibre $E_{(0,0)}$ over the origin is the whole $\bbP^3$.

The $\varepsilon$-exceptional divisor $E$ is a hypersurface of $\bbP^3 \times \Lambda$ and it is generically a quadric bundle over $\Lambda$ with an isolated fibre which is isomorphic to $\mathbf P^3$. In $X$, the divisor $-E$ is $\varepsilon$-anti ample and the $(-E)$-minimal contracted curves are the lines contained in the fibres of $E \to \Lambda$.

\begin{lemma}
Let $\ell \subset E$ be a line contracted by $\varepsilon$. Then $-E \cdot \ell = 1$.
\end{lemma}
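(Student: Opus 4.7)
The plan is to realize $E$ as the restriction to $X$ of the exceptional divisor of an ambient blow-up of $\C^6$ along $\Lambda$, and then read off $-E\cdot \ell$ from the well-known description of the conormal bundle of the exceptional divisor of a blow-up along a smooth center with trivial normal bundle.

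More precisely, I would first consider the blow-up $\pi : \mathrm{Bl}_\Lambda \C^6 \to \C^6$ of $\C^6$ along the smooth codimension-$4$ subvariety $\Lambda\simeq \C^2$. This ambient blow-up is smooth and embeds naturally in $\bbP^3\times \C^6$ cut out by the $2\times 2$-minor condition appearing in the equations of $X$. The equations defining $X$ then identify it with the strict transform of $Y$ under $\pi$, and the exceptional divisor of $\varepsilon : X \to Y$ coincides with $X\cap\tilde E_0$ as a Cartier divisor on $X$, where $\tilde E_0 = \bbP^3\times \Lambda$ is the exceptional divisor of $\pi$.

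Next, since the normal bundle of $\Lambda$ in $\C^6$ is trivial of rank $4$, we have $\tilde E_0 \simeq \bbP(N_{\Lambda/\C^6}) \simeq \Lambda\times \bbP^3$, and the standard formula for a blow-up gives
$$
\mathcal O_{\tilde E_0}(-\tilde E_0) \simeq p^*\mathcal O_{\bbP^3}(1),
$$
where $p : \tilde E_0 \to \bbP^3$ is the second projection. Restricting to $E = X\cap \tilde E_0$, this yields $\mathcal O_E(-E) \simeq (p|_E)^*\mathcal O_{\bbP^3}(1)$.

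Finally, any line $\ell$ contracted by $\varepsilon$ lies in a single fibre $E_{(\lambda,\mu)}$, which sits inside $\bbP^3\times\{(\lambda,\mu)\}$ either as a quadric surface or, in the special case $(\lambda,\mu) = (0,0)$, as the whole $\bbP^3$. In either situation the projection $p$ sends $\ell$ isomorphically onto a line of $\bbP^3$, and we conclude
$$
-E\cdot \ell \;=\; \mathcal O_{\bbP^3}(1)\cdot p(\ell) \;=\; 1.
$$
The only delicate point in carrying this out is the careful bookkeeping of the identification of $X$ with the strict transform and the sign convention for $\mathcal O(1)$ on the projectivized normal bundle; once both are fixed, the intersection number is immediate and no case distinction between the quadric fibres and the distinguished $\bbP^3$-fibre is needed.
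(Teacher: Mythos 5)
Your argument is correct, and it takes a genuinely different route from the paper. The paper degenerates a general line to a line in a fixed smooth quadric fibre $E_{(1,1)}$, then slices $Y$ by $\{\lambda=\mu=1,\ x_1=0\}$ to produce an affine quadric cone $\Sigma$ whose proper transform $\Sigma'\subset X$ is its minimal resolution; the exceptional $(-2)$-curve $\gamma=\Sigma'\cap E$ is a hyperplane section (hence numerically twice a line) of $E_{(1,1)}$, and the computation $-E\cdot\gamma=-E|_{\Sigma'}\cdot\gamma=-\gamma^2=2$ gives $-E\cdot\ell=1$. You instead identify $X$ with the strict transform of $Y$ in $\mathrm{Bl}_\Lambda\mathbf{C}^6$ and pull back the identity $\mathcal O_{\tilde E_0}(-\tilde E_0)\simeq p^*\mathcal O_{\mathbf{P}^3}(1)$ on the ambient exceptional divisor $\tilde E_0\simeq\Lambda\times\mathbf{P}^3$. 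This is cleaner: it treats all fibres uniformly, with no degeneration step and no auxiliary surface. The one point you flag but should actually verify is that the reduced exceptional divisor $E$ of $\varepsilon$ equals the scheme-theoretic restriction $\tilde E_0|_X$; equivalently, that this restriction is reduced. This does hold here and is visible from the displayed equations: in the chart $u_1=1$ the strict transform is $\{\lambda(1+u_2^2)+\mu(u_3^2+u_4^2)+x_1(u_2^2+u_3^2+u_4^2)=0\}$ and its intersection with $\{x_1=0\}$ is the reduced irreducible hypersurface $\{\lambda(u_1^2+u_2^2)+\mu(u_3^2+u_4^2)=0\}$ of $\mathbf{P}^3\times\Lambda$, which is exactly the $E$ described in the paper. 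With that check supplied, your proof is complete; what the paper's approach buys instead is independence from the ambient blow-up description, at the cost of the degeneration argument and the explicit resolution of the $A_1$ singularity (where, incidentally, the paper's ``$\gamma^2=2$'' should read $\gamma^2=-2$).
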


\begin{proof}
Let $\ell$ be a line contained in the general fibre $E_{(\lambda,\mu)}$. Then we may degenerate $\ell$ to a line $\ell_0$ contained in $E_{(0,0)} \simeq \bbP^3$, or contained in a singular quadric $E_{(\lambda',\mu')}$, yielding $-E\cdot \ell = -E\cdot \ell_0$. Hence we only need to check that $-E\cdot \ell = 1$ for $\ell$ contained in a smooth quadric $E_{(\lambda,\mu)} \simeq \bbP^1 \times \bbP^1$.

Consider the surface complete intersection 
$$
\Sigma = Y \cap \left\{ \lambda = \mu = 1, \, x_1 = 0 \right\}.
$$
This is the affine quadric cone in $\mathbf C^3$ given by the equation
$$
x_2^2 + x_3^2 + x_4^2 = 0.
$$
Let $\Sigma' \subset X$ be the proper transform of $\Sigma$ via $\varepsilon$, i.e., $\Sigma' \to \Sigma$ is the resolution of the singular point of $\Sigma$. This resolution contracts a smooth rational curve $\gamma = \Sigma' \cap E$ with $\gamma^2 = 2$. Moreover, $\gamma$ is by construction a hyperplane section of the quadric $E_{(1,1)}$. For any line $\ell$ in $E_{(1,1)}$ we have thus 
$$
-2E|_{E_{(1,1)}}\cdot \ell = -E|_{E_{(1,1)}}\cdot \gamma = -E|_{\Sigma'}\cdot \gamma = -\gamma^2 = 2,
$$
yielding $-E\cdot \ell = 1$ in $X$.
\end{proof}

\begin{lemma}
The blowdown morphism $\varepsilon : X \to Y$ is a divisorial elementary Mori contraction of submaximal length. 

The fibration $E \to \Lambda = \varepsilon(E)$, generically a quadric bundle, admits a fibre $E_0$ which is isomorphic to $\bbP^3$ with $-K_X|_{E_0}$ the hyperplane polarization.
\end{lemma}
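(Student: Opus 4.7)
The plan is to assemble the statement from the two preceding lemmas (smoothness of $X$ and $-E\cdot\ell=1$ on every contracted line) by a short adjunction computation and a degeneration argument. The four steps are: (1) compute $-K_X\cdot\ell$ for $\ell$ a line of either ruling of a smooth quadric fibre; (2) identify $E_{(0,0)}\simeq\bbP^3$ and use the degeneration of a closed deformation family of lines to the special fibre to establish simultaneously that both rulings give a common class in $N_1(X)$ and that $-K_X|_{E_{(0,0)}}\simeq\mathcal O_{\bbP^3}(1)$; (3) conclude that $\overline{\mathrm{NE}}(X/Y)$ is one-dimensional, whence $\varepsilon$ is an elementary divisorial Mori contraction; (4) match the numerical data with the submaximality condition of Setup \ref{setup:setup}.

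For step (1), a general fibre $E_{(\lambda,\mu)}\simeq\bbP^1\times\bbP^1$ over $(\lambda,\mu)\neq(0,0)$ is a local complete intersection in the smooth variety $X$, since $(\lambda,\mu)$ is a smooth point of $\Lambda\simeq\C^2$. Adjunction then gives $K_{E_{(\lambda,\mu)}}=(K_X+E)|_{E_{(\lambda,\mu)}}$; combining $-K_{E_{(\lambda,\mu)}}\cdot\ell=2$ (for $\ell$ a line of either ruling of $\bbP^1\times\bbP^1$) with the previous lemma $-E\cdot\ell=1$ yields $-K_X\cdot\ell=1$.

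For step (2), the defining equations show $E_{(0,0)}=\bbP^3_{[u_1:u_2:u_3:u_4]}$. Let $\overline{\mathcal H}$ be a closed deformation family of a ruling-A line $\ell$. Each member of $\overline{\mathcal H}$ is contracted by $\varepsilon$ by Corollary \ref{cor:all contracted}, so by connectedness of rational degenerations every member lies in a single fibre of $\varepsilon|_E:E\to\Lambda$. The induced map $\overline{\mathcal H}\to\Lambda$ is proper (using the relative Chow variety of $X$ over $Y$) and dominant, since ruling-A lines cover the smooth quadric fibres which are dense in $\Lambda$. Properness then forces its image to be all of $\Lambda$, so some member $C$ of $\overline{\mathcal H}$ is supported entirely inside $E_{(0,0)}\simeq\bbP^3$. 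By Corollary \ref{cor:constant degree on defofamily} we have $-K_X\cdot C=-E\cdot C=1$; writing $-K_X|_{E_{(0,0)}}\simeq\mathcal O_{\bbP^3}(d)$ for some $d\in\Z$ and denoting by $\delta>0$ the degree of $C$ as a $1$-cycle of $\bbP^3$, the equation $d\delta=1$ forces $d=\delta=1$, so $C$ is a line of $\bbP^3$ and $-K_X|_{E_{(0,0)}}$ is the hyperplane polarization. Applying the same argument to a ruling-B line produces another line in $E_{(0,0)}$; since all lines of $\bbP^3$ share the same pushforward class in $N_1(X)$ and the class is constant on an irreducible deformation family, both rulings give a common class $[\ell]$ in $N_1(X)$.

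Step (3) is now immediate: every contracted rational curve has class a positive integer multiple of $[\ell]$, on which $-K_X$ is strictly positive, so $\overline{\mathrm{NE}}(X/Y)=\R_{\geq 0}[\ell]$ and $\varepsilon$ is a Mori contraction; it is divisorial with irreducible exceptional divisor $E$ and elementary. For step (4), the general fibre $F$ of $E\to\Lambda$ has dimension $n=2$ and $l(\varepsilon)=1=n-1=\dim E-\dim X+\dim F$, which matches the submaximality condition of Setup \ref{setup:setup}. The main obstacle is step (2), which relies on using properness of the relative Chow variety over $Y$ to force the closed deformation family of a ruling line to contain a $1$-cycle lying entirely inside the special fibre $E_{(0,0)}$; this is the only point where the explicit geometry of the affine cone $Y$ is genuinely used.
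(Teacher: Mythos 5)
Your proposal is correct and shares the paper's skeleton for the adjunction computation $-K_X\cdot\ell=1$, the degeneration to the central fibre identifying $-K_X|_{E_0}\simeq\mathcal O_{\bbP^3}(1)$, and the final submaximality count; your justification of the degeneration via properness of the relative Chow variety over $Y$ makes precise what the paper simply asserts. Where you genuinely diverge is the proof of elementarity. The paper invokes the relative cone theorem to produce an extremal contraction $\eta:X\to X'$ factoring $\varepsilon$, notes that $\eta$ must contract the fibre $E_0\simeq\bbP^3$ to a point, and then uses semicontinuity of fibre dimension for $\gamma:X'\to Y$ restricted to $\eta(E)$ to force a nearby quadric fibre --- hence both rulings --- to be contracted, so that $\gamma$ is an isomorphism and $\varepsilon=\eta$ is elementary. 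You instead compute $\overline{\mathrm{NE}}(X/Y)$ directly, showing every contracted curve is numerically a non-negative multiple of one class $[\ell]$ by degenerating both rulings to lines of $E_0$. This is a valid and more self-contained route, but it carries a burden the paper's argument sidesteps: you must account for \emph{all} fibres of $E\to\Lambda$ and all contracted curves, not only the rational ones in smooth quadric fibres. In particular, over $\{\lambda\mu=0\}\setminus\{(0,0)\}$ the quadratic form $\mathrm{diag}(\lambda,\lambda,\mu,\mu)$ has rank $2$, so those fibres are pairs of planes; you should add a line checking that curves there are also multiples of $[\ell]$ (e.g., a member of your closed family $\overline{\mathcal H}$ over such a point has $\mathcal O_{\bbP^3}(1)$-degree $1$, hence is a line in one of the planes, and the two planes share a line, so every curve in either plane is numerically a multiple of $[\ell]$). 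With that detail supplied, your step (3) is complete; the paper's cone-theorem argument avoids this fibre-by-fibre inspection by only needing information about the general fibre.
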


\begin{proof}
The general fibre $E_p$ is embedded as a quadric surface in $\bbP^3$ and it satisfies
$$
\mathcal O_{E_p}(2) = -K_{E_p} = (-K_X-E)|_{E_p},
$$
and since $-E\cdot \ell = 1$ for $\ell$ a line contained in $E_p$ and $-K_{E_p}\cdot \ell = 2$, we have
$$
-K_X\cdot \ell = 1 = \dim E_p - 1
$$
for $\ell$ any line in the quadric $E_p$. 

If we degenerate the general line $\ell \in E_p$ to a line $\ell_0$ in the central fibre $E_0 = E_{(0,0)} \simeq \bbP^3$, we have $-K_X\cdot \ell_0 = 1$ and via the isomorphism $E_0\simeq \bbP^3$, the restriction $-K_X|_{E_0}$ is the polarization $\mathcal O_{\bbP^3}(1)$. In particular, the divisor $-K_X$ is relatively ample, hence $\varepsilon$ is a Mori contraction.

It remains to be proven that $\varepsilon$ is an elementary contraction. By the relative cone theorem (see for instance \cite[Theorem 7.51]{Deb01}) there exists a curve $C\subset E_0$ whose class is extremal in the relative Mori cone of $\varepsilon : X \to Y$, and an elementary contraction $\eta : X \to X'$ which contracts all the curves in the numerical equivalence class of $C$ and fits in the following commutative diagram:
\begin{center}
\begin{tikzcd}[row sep=small,column sep=small]
E \arrow[r,phantom,"\subset"] \arrow[dddrr,out=270,in=180] & X \arrow[ddr,"\varepsilon",swap] \arrow[rr,"\eta"] & & X' \arrow[ddl,"\gamma"] & \eta(E) \arrow[l,phantom,"\supset"] \arrow[dddll,out=270,in=0]\\
& & & & \\
& & Y & & \\
& & \Lambda \arrow[u,phantom,sloped,"\subset"] & &
\end{tikzcd}
\end{center}
Since $E_0$ is isomorphic to $\bbP^3$, and $\eta$ contracts a curve $C\subset E_0$, then $\eta(E_0)$ is a point of $X'$. The restriction of $\gamma$ to $\eta(E)$ is thus a proper fibration over $\Lambda$ whose central fibre is a point; by semicontinuity $\gamma$ is locally an isomorphism over the origin of $Y$. This ensures that there exists a quadric fibre $E_p$ for $p\neq 0$ such that $\eta(E_p)$ is a point; as a consequence both families of lines on the general fibre of $E\to \Lambda$ are contracted, and as a consequence $\eta(E) \simeq \Lambda$ and $\gamma$ is an isomorphism. Therefore $\varepsilon$ is elementary. Moreover, it has submaximal length (\color{purple}Theorem \ref{thm:IonescuWisniewski}\color{black}) by the equality $l(\varepsilon) = -K_X\cdot \ell = 1 = \dim E_p -1$, for $\ell$ a line contained in the general fibre $E_p$.
\end{proof}

\vspace{1cm}

\small I\scriptsize NSTITUT DE \small M\scriptsize ATHÉMATIQUES DE \small T\scriptsize OULOUSE (CNRS UMR 5219), \small U\scriptsize NIVERSITÉ \small P\scriptsize AUL \small S\scriptsize ABATIER, 31062 \small T\scriptsize OULOUSE CEDEX 9, \small F\scriptsize RANCE

\vspace{.2cm}
\texttt{bruno.dewer@math.univ-toulouse.fr}

\end{document}